\date{\empty}
\numberwithin{equation}{section} \theoremstyle{plain}
\newtheorem*{thm*}{Main Theorem}
\newtheorem{theorem}{Theorem}[section]
\newtheorem{corollary}[theorem]{Corollary}
\newtheorem*{corollary*}{Corollary}
\newtheorem*{claim*}{Claim}
\newtheorem{lemma}[theorem]{Lemma}
\newtheorem*{lemma*}{Lemma}
\newtheorem*{proposition*}{Proposition}
\newtheorem{remark}[theorem]{Remark}
\newtheorem*{remark*}{Remark}
\newtheorem*{example*}{Example}
\newtheorem*{question*}{Question}
\newtheorem*{definition*}{Definition}
\newtheorem*{acknowledgements*}{ACKNOWLEDGEMENTS}
\newcommand{\core}[1]{#1^{\tiny{\textcircled{\tiny \#}}}}
\newcommand{\bc}[1]{#1^{(b,c)}}
\newcommand{\uv}[1]{#1^{(u,v)}}
\newcommand{\bcw}[1]{#1^{(eb,cf)}}
\newcommand{\along}[1]{#1^{\|d}}
\begin{document}
\begin{center}
{\large \bf Existence criteria and expressions of the $(b,c)$-inverse in rings and its applications}

\vspace{0.4cm} {\small \bf Sanzhang Xu }
\footnote{ Sanzhang Xu (Corresponding author E-mail: xusanzhang5222@126.com): School of Mathematics, Southeast University, Nanjing 210096, China.}
\vspace{0.4cm} {\small \bf and Julio Ben\'{\i}tez}
\footnote{Julio Ben\'{\i}tez (E-mail: jbenitez@mat.upv.es): Universitat Polit\`{e}cnica de Val\`{e}ncia, Instituto de Matem\'{a}tica Multidisciplinar, Valencia, 46022, Spain.}

\end{center}
\bigskip

{ \bf  Abstract:}  \leftskip0truemm\rightskip0truemm
Let $R$ be a ring.
Existence criteria for the $(b,c)$-inverse are given.
We present explicit expressions for the $(b,c)$-inverse by using inner inverses.
We answer the question when the $(b,c)$-inverse of $a\in R$ is an inner inverse of $a$.
As applications, we give a unified theory of some well-known results of the $\{1,3\}$-inverse, the $\{1,4\}$-inverse,
the Moore-Penrose inverse, the group inverse and the core inverse.

{ \textbf{Key words:}}  $(b,c)$-inverse, inner inverse, the inverse along an element, annihilator.

{ \textbf{AMS subject classifications:}}  16W10, 15A09.
 \bigskip

\section { \bf Introduction}

Throughout this paper, $R$ denotes a unital ring.
In \cite[Definition 1.3]{Dc}, Drazin introduced a new class of outer inverses in the setting of semigroups, namely, the $(b,c)$-inverse.
Let $a,b,c\in R$. We say that $y\in R$ is the {\em $(b,c)$-inverse} of $a$ if we have
\begin{equation} \label{abcira}
y\in (bRy)\cap (yRc),~yab=b~\text{and}~cay=c.
\end{equation}
If such $y\in R$ exists, then it is unique and denoted by $\bc{a}$.
The $(b,c)$-inverse is a generalization of the Moore-Penrose inverse, the Drazin
inverse, the group inverse and the core inverse.
Many existence criteria and properties of the $(b,c)$-inverse can be found in \cite{BBJ,BK,Dc,Df,KCC,KVC,Rg,RMa,Wh,WCC} etc.
In \cite[Definition 6.2 and 6.3]{Dc}, Drazin introduced the hybrid $(b,c)$-inverse and the annihilator $(b,c)$-inverse of $a$.
We call that $y\in R$ is the {\em hybrid $(b,c)$-inverse} of $a$ if we have $yay=y$, $yR=bR$ and $y^{\circ}=c^{\circ}$.
We call that $y\in R$ is the {\em annihilator $(b,c)$-inverse} of $a$ if we have $yay=y$, ${}^{\circ}y={}^{\circ}b$ and $y^{\circ}=c^{\circ}$.
By \cite[Theorem 6.4]{Dc}, if the the hybrid $(b,c)$-inverse (resp. the annihilator $(b,c)$-inverse) of $a$ exists, then it is unique.

In \cite{Mb}, Mary introduced a new type of generalized inverse, namely, the inverse along an element.
This inverse depends on Green's relations \cite{G}.
Let $a,d\in R$. We say that $a$ is {\em invertible along $d$} if there exists $y\in R$ such that
\begin{equation} \label{iiranlongia}
yad=d=day,~yR\subseteq dR~\text{and}~Ry\subseteq Rd.
\end{equation}
If such $y$ exists, then it is unique and denoted by $a^{\| d}$.
The inverse along an element extends some known generalized inverses, for example, the group inverse, the Drazin inverse
and the Moore-Penrose inverse. Many existence criteria of the inverse along an element can be found in \cite{Mb,MPc} etc.
By the definition of the inverse along $d$, we have that $a^{\| d}$ is the $(d,d)$-inverse of $a$.

The following notations $aR = \{ax \mid x\in R\}$, $Ra = \{xa \mid x\in R\}$,
$^{\circ}a = \{x\in R \mid xa = 0\}$ and $a^{\circ} = \{x \in R \mid ax = 0\}$  will be used in the sequel.
An involutory ring $R$ means that $R$ is a unital ring with involution, i.e., a ring with unity $1$, and a mapping $a\mapsto a^*$
in $R$ that satisfies $(a^*)^*=a$, $(ab)^*=b^*a^*$ and $(a+b)^*=a^*+b^*$, for all $a,b\in R$.
For convenience, we will also assume that $R$ has an involution.
The notations of the $\{1,3\}$-inverse, the $\{1,4\}$-inverse, the Moore-Penrose inverse, the group inverse and the core inverse etc. can be found in \cite{XCZ}.
\section { \bf Preliminaries}
In this section, we will collect and present some useful preliminaries, which will be used in the sequel.
\begin{lemma}   \label{134invese}
Let $a\in R$. Then
\begin{itemize}
\item[{\rm (1)}] \emph{\cite[p.201]{Ha}} $a\in R^{\{1,3\}}$ with $x\in a\{1,3\}$ if and only if $x^{\ast}a^{\ast}a=a$;
\item[{\rm (2)}] \emph{\cite[p.201]{Ha}} $a\in R^{\{1,4\}}$ with $y\in a\{1,4\}$ if and only if $aa^{\ast}y^{\ast}=a$;
\item[{\rm (3)}] \emph{\cite[Theorem 2]{HC}} $a\in R^{\{1,3\}}$ if and only if $R=Ra^{\ast} \oplus {}^{\circ}a$;
\item[{\rm (4)}] \emph{\cite[Theorem 3]{HC}} $a\in R^{\{1,4\}}$ if and only if $R=a^{\ast}R \oplus a^{\circ}$.
\end{itemize}
\end{lemma}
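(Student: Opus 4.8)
The plan is to treat the four items as two dual pairs: items (1) and (3) concern the $\{1,3\}$-inverse, while items (2) and (4) are their mirror images under the involution, so I would prove (1) and (3) in detail and obtain (2) and (4) by the symmetric argument (formally, by passing to the opposite ring equipped with the same involution and swapping $Ra^{\ast}$, ${}^{\circ}a$ for $a^{\ast}R$, $a^{\circ}$).

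For item (1), recall that $x\in a\{1,3\}$ means $axa=a$ together with $(ax)^{\ast}=ax$. For the forward implication I would use self-adjointness of $ax$ to write $ax=(ax)^{\ast}=x^{\ast}a^{\ast}$, whence $x^{\ast}a^{\ast}a=(ax)a=axa=a$. For the converse, starting from $x^{\ast}a^{\ast}a=a$, I would first take adjoints to record the companion identity $a^{\ast}ax=a^{\ast}$. Right-multiplying the hypothesis by $x$ gives $x^{\ast}a^{\ast}ax=ax$, and substituting the companion identity into the left-hand side collapses it to $x^{\ast}a^{\ast}=ax$; this simultaneously exhibits $ax$ as self-adjoint. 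Feeding $x^{\ast}a^{\ast}=ax$ back into the hypothesis yields $axa=a$, so $x\in a\{1,3\}$. Item (2) is the opposite-side version of exactly this computation.

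For item (3), I would lean on item (1). Assuming $a\in R^{\{1,3\}}$ with $x\in a\{1,3\}$, the element $ax=x^{\ast}a^{\ast}$ is a self-adjoint idempotent, so for any $r\in R$ the splitting $r=r(ax)+r(1-ax)$ places $r(ax)=rx^{\ast}a^{\ast}\in Ra^{\ast}$ and, since $r(1-ax)a=r(a-axa)=0$, places $r(1-ax)\in{}^{\circ}a$; hence $R=Ra^{\ast}+{}^{\circ}a$. Directness follows because $wa^{\ast}\in{}^{\circ}a$ gives $wa^{\ast}a=0$, and right-multiplying by $x$ together with $a^{\ast}ax=a^{\ast}$ forces $wa^{\ast}=0$. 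Conversely, given $R=Ra^{\ast}\oplus{}^{\circ}a$, I would decompose $1=ua^{\ast}+q$ with $q\in{}^{\circ}a$; right-multiplying by $a$ kills the second summand and yields $a=ua^{\ast}a$. Setting $x=u^{\ast}$ turns this into $x^{\ast}a^{\ast}a=a$, so item (1) delivers $x\in a\{1,3\}$ and hence $a\in R^{\{1,3\}}$. Item (4) is the involutive dual, proved from item (2) using right annihilators and right ideals in place of the left ones.

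The computations are short, so the main obstacle is bookkeeping with the involution rather than length: the crux of each converse is to generate the companion identity by taking adjoints and then to spot the one-sided cancellation that collapses $x^{\ast}a^{\ast}ax$ (respectively $wa^{\ast}a$) by means of it. Once that cancellation is isolated, items (3) and (4) reduce to the routine fact that a self-adjoint idempotent $ax$ (respectively $ya$) splits $R$ along $a^{\ast}$ and the appropriate annihilator.
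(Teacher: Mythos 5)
Your argument is correct and complete: the computation for item (1) (taking adjoints to get the companion identity $a^{\ast}ax=a^{\ast}$, then collapsing $x^{\ast}a^{\ast}ax$ to recover both $axa=a$ and the self-adjointness of $ax$) is the standard proof, the splitting of $R$ by the self-adjoint idempotent $ax$ in item (3) is sound in both directions, and the passage to (2) and (4) by the involution is legitimate. Note, however, that the paper offers no proof of this lemma at all --- it is stated as a preliminary with citations to Hartwig and to Han--Chen --- so there is nothing to compare your route against; you have simply supplied a self-contained verification of results the paper imports from the literature.
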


\begin{lemma}  \label{pirgroupa}
Let $a\in R$. Then
\begin{itemize}
\item[{\rm (1)}] \emph{\cite[Proposition 7]{Ha}} $a\in R^\#$ if and only if $R=aR\oplus a^{\circ}$. 
\item[{\rm (2)}] \emph{\cite[Proposition 7]{Ha}} $a\in R^\#$ if and only if $R=Ra\oplus {}^{\circ}a$. 
\item[{\rm (3)}] \emph{\cite[Proposition 8.22]{BRB}} $a\in R^\#$ if and only if $a^2x=a$ and $ya^2=a$ both have solutions. 
\end{itemize}
\end{lemma}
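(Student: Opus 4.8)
The plan is to prove the three characterizations separately, noting that in each case the implication from group invertibility is a routine verification, while the converse carries all the content; a single idempotent/corner-ring device will handle every converse. For the easy directions I would fix $a\in R^\#$ with group inverse $a^\#$ and set $e=aa^\#=a^\#a$. For part (3) one checks at once that $x=y=a^\#$ work, since $a^2a^\#=a(aa^\#)=a(a^\#a)=aa^\#a=a$ and likewise $a^\#a^2=a$. For part (1) I would show $aR=eR$ (from $a=ea$ together with $e=aa^\#\in aR$) and $a^\circ=(1-e)R$ (from $e=a^\#a$, so $ax=0$ forces $ex=0$, and conversely $ae=a$ gives $a(1-e)=0$); hence $R=eR\oplus(1-e)R=aR\oplus a^\circ$. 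Part (2) is the left--right dual and needs no new idea.

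The substantive step is the converse of (3), which I would prove first and then reuse. Given $a^2x=a$ and $ya^2=a$, I would right-multiply the second equation by $x$ and substitute the first to obtain $ya=ya^2x=ax$; writing $g:=ax=ya$ for this common value, one gets $ag=a^2x=a$ and $ga=ya^2=a$, and $g^2=(ya)g=y(ag)=ya=g$, so $g$ is an idempotent with $ag=ga=a$. Thus $a$ lives in the corner ring $gRg$ with identity $g$, and I claim it is invertible there: $gxg$ is a right inverse, since $a\cdot gxg=(ag)xg=axg=g$, and $gyg$ is a left inverse, since $gyg\cdot a=gy(ga)=gya=g$. In a monoid a left and a right inverse must coincide, so $a^\#:=gxg=gyg$ satisfies $aa^\#=a^\#a=g$ and $a^\#aa^\#=a^\#$, giving $a\in R^\#$.

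For the converse of (1) I would use that a complementary pair of right ideals $R=aR\oplus a^\circ$ produces an idempotent $e$ with $aR=eR$ and $a^\circ=(1-e)R$ (decompose $1=e+(1-e)$). Then $a\in eR$ yields $ea=a$, and $1-e\in a^\circ$ yields $ae=a$. Since $e\in eR=aR$, I would write $e=ar_0$ and set $c=er_0e\in eRe$, so that $ac=e$. The decisive move is to upgrade this one-sided inverse to a two-sided one using directness: $a(ca-e)=(ac)a-ae=ea-a=a-a=0$, so $ca-e\in a^\circ=(1-e)R$, while plainly $ca-e\in eR$; directness of the sum forces $ca-e=0$. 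Hence $ca=ac=e$ in $eRe$, and $a^\#=c$ is the group inverse. Part (2) is again the dual statement.

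The hard part, and the only real obstacle, is precisely this upgrade from a one-sided to a two-sided inverse. In a general (not Dedekind-finite) ring a right inverse need not be a left inverse, and a single equation such as $a^2x=a$ only delivers $a\in a^2R$, which is strictly weaker than group invertibility. What rescues each converse is the extra rigidity encoded in the hypothesis: the two simultaneous equations in (3) let me identify $ax$ with $ya$, and the directness of the decompositions in (1) and (2) lets me annihilate the discrepancy $ca-e$. I would therefore isolate the corner-ring computation once and invoke it in each part, so that the argument that ``left inverse $=$ right inverse'' appears in a single clean place.
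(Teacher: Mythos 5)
This lemma is quoted in the paper as a known preliminary (Hartwig's Proposition~7 and Bhaskara Rao's Proposition~8.22) and no proof is given there, so there is nothing in the paper to compare against line by line; your task was effectively to supply the classical proofs, and you have done so correctly. Your argument for (3) --- extracting the idempotent $g=ax=ya$ from $a^2x=a$, $ya^2=a$, and then observing that $a$ has both a left and a right inverse in the monoid $gRg$ --- is exactly the standard route, and the computations $ag=ga=a$, $g^2=g$, $a(gxg)=g$, $(gyg)a=g$ all check out. For (1), the passage from the direct sum $R=aR\oplus a^{\circ}$ to an idempotent $e$ with $aR=eR$ and $a^{\circ}=(1-e)R$, the construction $c=er_0e$ with $ac=e$, and the upgrade $ca-e\in eR\cap(1-e)R=\{0\}$ are all correct; together with $ae=ea=a$ and $cac=c$ this does produce the group inverse. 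The one point worth making explicit if you write this up is the uniqueness-of-decomposition argument showing that $e$ is idempotent and that $ex=x$ for all $x\in aR$ (you only gesture at it with ``decompose $1=e+(1-e)$''), but that is routine. In short: the proof is complete and is essentially the argument found in the cited sources.
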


\begin{lemma}\emph{\cite[Lemma 8]{N}} \label{annihilator}
Let $a, b\in R$. Then:
\begin{itemize}
\item[{\rm (1)}] $aR\subseteq bR$ implies ${}^{\circ}b\subseteq {}^{\circ}a$ and the converse
is valid whenever $b$ is regular;
\item[{\rm (2)}] $Ra\subseteq Rb$ implies $b^{\circ}\subseteq a^{\circ}$ and the converse
is valid whenever $b$ is regular.
\end{itemize}
\end{lemma}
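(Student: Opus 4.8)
The plan is to prove both parts directly from the definitions, handling each forward implication by a one-line annihilator computation and each converse implication by exploiting an inner inverse of $b$ to manufacture a suitable element of the annihilator. I would treat part (1) in detail; part (2) then follows by the evident left-right symmetry.

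For the forward direction of (1), I would first note that $aR\subseteq bR$ forces $a=a\cdot 1\in bR$, so that $a=bs$ for some $s\in R$. Then for any $x\in{}^{\circ}b$ (i.e.\ $xb=0$) I compute $xa=x(bs)=(xb)s=0$, whence $x\in{}^{\circ}a$; this yields ${}^{\circ}b\subseteq{}^{\circ}a$. The point is merely that membership in $bR$ lets the left annihilator of $b$ pass through to $a$.

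For the converse, I assume $b$ is regular, say $b=bb^{-}b$ for some $b^{-}\in R$, together with ${}^{\circ}b\subseteq{}^{\circ}a$. The key step is to observe that the element $1-bb^{-}$ annihilates $b$ on the left, since $(1-bb^{-})b=b-bb^{-}b=b-b=0$, so $1-bb^{-}\in{}^{\circ}b\subseteq{}^{\circ}a$. Applying this inclusion to $a$ gives $(1-bb^{-})a=0$, that is, $a=b(b^{-}a)\in bR$, and therefore $aR\subseteq bR$. This is where regularity is genuinely used: it is what converts the abstract annihilator inclusion into an explicit factorization of $a$ through $b$.

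Part (2) is proved identically after interchanging left and right throughout: $Ra\subseteq Rb$ gives $a=sb$ and the computation $ax=s(bx)=0$ for $x\in b^{\circ}$, while for the converse one uses $b(1-b^{-}b)=b-bb^{-}b=0$ to place $1-b^{-}b$ in $b^{\circ}\subseteq a^{\circ}$, so that $a(1-b^{-}b)=0$ and hence $a=(ab^{-})b\in Rb$. I do not anticipate any real obstacle here: the only idea needed is the passage from the hypothesis ``$b$ regular'' to the explicit idempotent $bb^{-}$ (respectively $b^{-}b$) sitting in the appropriate annihilator, and the rest is routine manipulation.
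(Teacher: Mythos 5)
Your proof is correct and complete; the paper itself gives no proof of this lemma, citing it directly from von Neumann, so there is nothing to compare against. Your argument (extract $a=bs$ from $aR\subseteq bR$ for the forward direction, and use the idempotent $1-bb^{-}\in{}^{\circ}b$, respectively $1-b^{-}b\in b^{\circ}$, to recover the factorization in the converse) is exactly the standard proof of this fact, and the left--right symmetry handles part (2) as you say.
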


\begin{lemma}\emph{\cite[Lemma 3.2]{WCC}} \label{pirregulara}
Let $a, b\in R$. Then:
\begin{itemize}
\item[{\rm (1)}] Let $aR=bR$. If $a$ is regular, then $b$ is regular;
\item[{\rm (2)}] Let $Ra=Rb$. If $a$ is regular, then $b$ is regular.
\end{itemize}
\end{lemma}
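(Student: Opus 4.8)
The plan is to convert each module equality into a pair of element-wise relations and then transport the regularity of $a$ to $b$ by direct substitution. First I would record the standing convention that $a$ being regular means there exists $x\in R$ with $axa=a$, so that proving regularity of $b$ reduces to exhibiting some $y\in R$ with $byb=b$. The whole argument is then a short manipulation of these identities, using the one-sided nature of each ideal equality to control on which side the factors land.

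For part (1), the inclusion $bR\subseteq aR$ furnishes $b=at$ for some $t\in R$, and the reverse inclusion $aR\subseteq bR$ furnishes $a=bs$ for some $s\in R$. The key computation is to check that $x$ still behaves like a one-sided inverse after passing from $a$ to $b$: starting from $b=at$ and invoking $axa=a$, I obtain $axb=ax(at)=(axa)t=at=b$. Substituting $a=bs$ into the remaining left-hand occurrence of $a$ then gives $b=axb=(bs)xb=b(sx)b$, so $y=sx$ is an inner inverse of $b$ and hence $b$ is regular.

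Part (2) is the exact left-right dual of part (1): from $Ra=Rb$ I would write $b=va$ and $a=ub$ for suitable $v,u\in R$, compute $bxa=v(axa)=va=b$, and substitute $a=ub$ to conclude $b=bx(ub)=b(xu)b$, so $y=xu$ serves as an inner inverse of $b$. I do not expect any genuine obstacle here; the only point requiring care is the order of the factors, so that the two substitutions land on opposite sides of $x$, which is precisely what the one-sided form of each ideal equality ($aR=bR$ giving right factorizations, $Ra=Rb$ giving left factorizations) guarantees.
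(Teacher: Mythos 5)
Your argument is correct: since $R$ is unital, $aR=bR$ yields $b=at$ and $a=bs$, and the computation $b=axb=(bs)xb=b(sx)b$ (with its left--right dual for part (2)) is exactly the standard proof of this fact. The paper itself cites \cite[Lemma 3.2]{WCC} without reproducing a proof, so there is nothing to diverge from; your write-up supplies the expected elementary verification.
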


\begin{lemma} \emph{\cite[lemma 3.1]{WCC}} \label{pirdcca}
Let $a, y\in R$ with $yay=y$. Then:
\begin{itemize}
\item[{\rm (1)}] $yaR=yR$;
\item[{\rm (2)}] $Ray=Ry$;
\item[{\rm (3)}] $a^{\circ}\cap yR=a^{\circ}\cap yaR=\{0\}$;
\item[{\rm (4)}] ${}^{\circ}a\cap Ry={}^{\circ}a\cap Ray=\{0\}$.
\end{itemize}
\end{lemma}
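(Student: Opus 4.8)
The plan is to prove the four statements in the order listed, relying solely on the hypothesis $yay=y$ and deriving (3) and (4) as consequences of (1) and (2). In each of the four items one inclusion or containment is immediate, and the substantive direction is obtained by inserting the factorization $y=yay$ into the appropriate slot. So the whole argument reduces to disciplined bookkeeping with this one identity.

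First I would establish (1) and (2). For (1), the inclusion $yaR\subseteq yR$ is clear, since $yar=y(ar)\in yR$ for every $r\in R$. For the reverse inclusion I would take an arbitrary $yr\in yR$ and rewrite it as $yr=(yay)r=ya(yr)\in yaR$, which gives $yR\subseteq yaR$ and hence equality. Statement (2) is the mirror image: $Ray\subseteq Ry$ is trivial, while for $ry\in Ry$ one writes $ry=r(yay)=(rya)y\in Ray$, yielding $Ry\subseteq Ray$ and thus $Ray=Ry$.

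For (3) I would first note that the two intersections coincide, because $yR=yaR$ by part (1); hence it suffices to show $a^{\circ}\cap yR=\{0\}$. The key observation is that every $x\in yR$ satisfies $x=yax$: writing $x=yr$ and using $yay=y$ gives $x=yr=(yay)r=ya(yr)=yax$. Consequently, if in addition $ax=0$, then $x=y(ax)=0$, which is exactly what is needed. Statement (4) follows by the symmetric argument on the other side: by part (2) the two intersections agree, and every $x\in Ry$ satisfies $x=xay$ (write $x=ry$ and compute $x=ry=r(yay)=(ry)ay=xay$), so that $xa=0$ forces $x=(xa)y=0$.

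I do not anticipate a genuine obstacle: the entire lemma rests on the fact that $yay=y$ makes $ya$ act as a left identity on the right ideal $yR$ (and $ay$ as a right identity on the left ideal $Ry$). The only point demanding mild care is reconciling the two equivalent forms of each intersection in (3) and (4), which I handle at the outset by invoking (1) and (2) respectively, so that a single intersection must be treated in each case.
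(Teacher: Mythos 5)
Your proof is correct and is the standard elementary argument; the paper itself does not prove this lemma but imports it verbatim from \cite[Lemma 3.1]{WCC}, and your derivation (using $yay=y$ to get $y=ya\cdot y$ and $y=y\cdot ay$, then reducing (3) and (4) to (1) and (2)) is exactly the expected one. The only blemish is notational: in (2) you write $ry=(rya)y$, whereas the grouping that exhibits membership in $Ray$ is $ry=(ry)ay$; the element is the same, so nothing is lost.
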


\begin{lemma} \emph{\cite[Theorem 7]{Mb}} \label{alongd}
Let $a,d\in S$. Then the following statements are equivalent:
\begin{itemize}
\item[{\rm (1)}] $a^{\| d}$ exists;
\item[{\rm (2)}] $dR\subseteq daR$ and $(da)^\#$ exists;
\item[{\rm (3)}] $Rd\subseteq Rad$ and $(ad)^\#$ exists.
\end{itemize}
In this case,
$$a^{\| d}=d(ad)^\#=(da)^\#d.$$
\end{lemma}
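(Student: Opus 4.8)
The plan is to establish the cycle of implications (1)\,$\Rightarrow$\,(2)\,$\Rightarrow$\,(1) together with its mirror image (1)\,$\Rightarrow$\,(3)\,$\Rightarrow$\,(1), and then to read off the two expressions for $a^{\|d}$ from the explicit constructions, using the uniqueness of the inverse along $d$ to identify them. The first preparatory step is to upgrade the inclusions in \eqref{iiranlongia} to equalities. If $y=a^{\|d}$, then $d=yad\in yR$ forces $dR\subseteq yR$, so with $yR\subseteq dR$ we obtain $yR=dR$; dually $d=day\in Ry$ gives $Ry=Rd$. Writing $y=dv$ for some $v$ (legitimate since $y\in yR=dR$) and using $yad=d$ yields $yay=(yad)v=dv=y$, so $y$ is in particular an outer inverse of $a$. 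These three facts ($yR=dR$, $Ry=Rd$, $yay=y$) will be the workhorses for the forward implications.

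For (1)\,$\Rightarrow$\,(2), the inclusion $dR\subseteq daR$ is immediate from $d=day=(da)y\in daR$. To produce $(da)^\#$ I would invoke the solvability criterion of Lemma~\ref{pirgroupa}(3) and exhibit solutions of $(da)^2x=da$ and $z(da)^2=da$. Right-multiplying $day=d$ by $a$ gives $(da)(ya)=da$; since $y\in dR$ and $d\in daR$ one can write $ya=da\,x'$, whence $(da)^2x'=da$. Symmetrically, from $d=yad$ one gets $da=(yad)a=(ya)(da)$, and since $y\in Rd$ one can write $ya=z\,da$, whence $z(da)^2=da$. Both solvability conditions hold, so $(da)^\#$ exists. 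The implication (1)\,$\Rightarrow$\,(3) is the left--right mirror, using $d=yad\in Rad$.

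For (2)\,$\Rightarrow$\,(1), I would set $y=(da)^\#d$ and verify the four defining conditions in \eqref{iiranlongia}. The crucial point, and the only place the hypothesis $dR\subseteq daR$ enters, is that the idempotent $(da)(da)^\#$ fixes $d$: since $d\in daR$ and $(da)(da)^\#$ acts as the identity on $daR$, we get $(da)(da)^\#d=d$; combined with the commutativity $(da)^\#(da)=(da)(da)^\#$ of the group inverse this delivers both $day=d$ and $yad=d$. The inclusion $Ry\subseteq Rd$ is clear from $y=(da)^\#d\in Rd$, while $yR\subseteq dR$ follows by rewriting $(da)^\#=(da)[(da)^\#]^2$, so that $y=d\bigl(a[(da)^\#]^2d\bigr)\in dR$. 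Hence $y=a^{\|d}=(da)^\#d$. The symmetric argument gives (3)\,$\Rightarrow$\,(1) with $a^{\|d}=d(ad)^\#$, and uniqueness of $a^{\|d}$ then forces the stated identity $a^{\|d}=d(ad)^\#=(da)^\#d$.

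I expect the main obstacle to be the careful bookkeeping of left versus right and the precise role of the two inclusion hypotheses: $dR\subseteq daR$ is exactly what makes the idempotent $(da)(da)^\#$ fix $d$ (and dually $Rd\subseteq Rad$ for the $ad$ side), so omitting it breaks $day=d$. The other delicate point is obtaining $yR\subseteq dR$ from $y=(da)^\#d$, which needs the rewriting $(da)^\#=(da)[(da)^\#]^2$ rather than a naive substitution; everything else reduces to routine manipulation of the defining identities.
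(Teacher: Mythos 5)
Your proof is correct, but note that the paper does not prove this lemma at all: it is imported verbatim as \cite[Theorem 7]{Mb}, so there is no in-paper argument to compare against. Your reconstruction is a valid, self-contained derivation, and it is genuinely more elementary than the cited source, which works in a semigroup via Green's relations; you instead reduce everything to the solvability criterion of Lemma~\ref{pirgroupa}(3) and to direct computation with the group-inverse identities $g(da)=(da)g$ and $g=(da)g^2$. All the delicate points are handled correctly: the upgrade of $yR\subseteq dR$, $Ry\subseteq Rd$ to equalities (which is what lets you write $ya=(da)x'$ and $ya=z(da)$ and hence solve $(da)^2x=da$ and $z(da)^2=da$), the use of $dR\subseteq daR$ to show that the idempotent $(da)(da)^\#$ fixes $d$, and the rewriting $(da)^\#=(da)[(da)^\#]^2$ to get $yR\subseteq dR$. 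The only point worth flagging is that your step ``$y\in yR$, hence $y\in dR$'' uses the unit of $R$; the lemma is stated for $a,d\in S$ (a leftover from the semigroup setting of \cite{Mb}, where one would pass to $S^1$), but since the paper works throughout in a unital ring this is harmless. What your approach buys is a proof readable entirely within the ring-theoretic toolkit the paper already assembles in Section 2; what it gives up is the semigroup-level generality and the structural insight of the $\mathcal{H}$-class characterization in Mary's original treatment.
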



\begin{lemma} \emph{\cite[Theorem 2.2]{MPc}} \label{alongo}
Let $a,d\in S$. Then $a$ is invertible along $d$ if and only if $dR=dadR$ and $Rd=Rdad$.
\end{lemma}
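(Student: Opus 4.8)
The plan is to prove both implications by reducing to the group-inverse characterization of Lemma~\ref{pirgroupa}(3) and the existence criterion of Lemma~\ref{alongd}(2), using throughout the observation that the set equalities $dR=dadR$ and $Rd=Rdad$ are equivalent to the membership statements $d\in dadR$ and $d\in Rdad$ (the reverse containments $dadR\subseteq dR$ and $Rdad\subseteq Rd$ being automatic).

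For the ``only if'' direction, suppose $\along{a}$ exists and call it $y$, so that $yad=d=day$, $yR\subseteq dR$ and $Ry\subseteq Rd$. From $yR\subseteq dR$ I would write $y=dt$ for some $t\in R$; substituting into $d=day$ gives $d=da(dt)=(dad)t\in dadR$, and hence $dR=dadR$. Dually, from $Ry\subseteq Rd$ write $y=sd$ for some $s\in R$; substituting into $d=yad$ gives $d=(sd)ad=s(dad)\in Rdad$, whence $Rd=Rdad$. This half is routine once one uses the two containments to factor $y$ through $d$ on the appropriate side.

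For the ``if'' direction, assume $dR=dadR$ and $Rd=Rdad$; I would verify the two hypotheses of Lemma~\ref{alongd}(2), namely $dR\subseteq daR$ and the existence of $(da)^{\#}$. The first is immediate, since $dR=dadR=da(dR)\subseteq daR$. For the second I would apply Lemma~\ref{pirgroupa}(3) to the element $da$, so that it suffices to solve the two one-sided equations $(da)^{2}x=da$ and $y(da)^{2}=da$. The left equation is easy: from $Rd=Rdad$ write $d=sdad$, so that $da=s\,dada=s(da)^{2}$, giving $y=s$.

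The main obstacle is the right equation $(da)^{2}x=da$, and here I would exploit $dR=dadR$ by writing $d=dadw$ for some $w\in R$ and substituting this identity into itself. Replacing the interior occurrence of $d$ in the word $d=da\,d\,w$ by $dadw$ produces $d=da\,dad\,w\,w=(da)^{2}dw^{2}$; multiplying on the right by $a$ then yields $da=(da)^{2}(dw^{2}a)$, so $x=dw^{2}a$ solves the right equation. With both one-sided equations solved, Lemma~\ref{pirgroupa}(3) gives $(da)^{\#}$, and Lemma~\ref{alongd}(2) then provides the existence of $\along{a}$. I expect the double-substitution step to be the only non-mechanical part of the argument; everything else is bookkeeping with one-sided ideals and the factorizations $d=dadw$ and $d=sdad$.
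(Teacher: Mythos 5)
Your proof is correct. Note that the paper offers no proof of this lemma at all --- it is quoted verbatim from \cite[Theorem 2.2]{MPc} --- so there is nothing internal to compare against; what you have written is a valid self-contained derivation from the other quoted results. Both directions check out: the ``only if'' half correctly factors $y$ as $dt$ and as $sd$ using the containments $yR\subseteq dR$ and $Ry\subseteq Rd$ and then substitutes into $d=day$ and $d=yad$; the ``if'' half correctly reduces to Lemma~\ref{alongd}(2), and the double-substitution $d=dadw\Rightarrow d=(da)^2dw^2\Rightarrow da=(da)^2(dw^2a)$ together with $d=s\,dad\Rightarrow da=s(da)^2$ verifies both one-sided equations needed for Lemma~\ref{pirgroupa}(3). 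The original argument of Mary and Patr\'{\i}cio works in semigroups via Green's relations and $\mathcal{H}$-classes, whereas your route is a purely equational ring-theoretic argument leaning on the one-sided solvability criterion for group invertibility; the latter is arguably more elementary in the unital-ring setting of this paper, at the cost of not applying directly to general semigroups (where Lemma~\ref{pirgroupa}(3) as stated, and the use of $1$ to pass from $yR\subseteq dR$ to $y\in dR$, are not available).
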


\begin{lemma} \emph{\cite[Theorem 2.1 (ii) and Proposition 6.1]{Dc}} \label{abcire}
Let $a,b,c\in R$. Then $y\in R$ is the $(b,c)$-inverse of $a$ if and only if $yay=y$, $yR=bR$ and $Ry=Rc$.
\end{lemma}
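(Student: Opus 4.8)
The plan is to prove the two implications of this equivalence separately, in both cases translating the membership conditions $y\in bRy$, $y\in yRc$, $yab=b$, $cay=c$ into statements about the one-sided ideals $bR$, $Rc$ and back. The only genuinely algebraic moves are two symmetric substitutions: inserting $cay=c$ (respectively $yab=b$) into a product to collapse it into $yay$, and then invoking the relation $yay=y$ to recover the ideal memberships. Throughout I would exploit that $R$ is unital, so that $y\in yR$ and $y\in Ry$ come for free.

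For the forward direction, suppose $y$ is the $(b,c)$-inverse of $a$. From $y\in bRy$ I can write $y=bty$ for some $t$, whence $y\in bR$ and so $yR\subseteq bR$; conversely $b=yab=y(ab)\in yR$ gives $bR\subseteq yR$, so $yR=bR$. The left-sided argument is dual: $y\in yRc$ yields $Ry\subseteq Rc$, while $c=cay=(ca)y\in Ry$ gives $Rc\subseteq Ry$, so $Ry=Rc$. To obtain $yay=y$ I would take the expression $y=ysc$ coming from $y\in yRc$, replace the trailing $c$ using $c=cay$, and regroup: $y=ysc=ys(cay)=(ysc)ay=yay$. (Equivalently one may start from $y=bty$ and use $b=yab$.)

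For the converse, assume $yay=y$, $yR=bR$ and $Ry=Rc$. Since $R$ is unital, $y\in yR=bR$, so $y=bm$ for some $m$; substituting the leftmost $y$ in $y=yay$ gives $y=(bm)ay=b(ma)y\in bRy$. Dually, $y\in Ry=Rc$ gives $y=nc$ for some $n$, and replacing the trailing $y$ in $y=yay$ yields $y=ya(nc)=y(an)c\in yRc$. For the absorption identities, note $b\in bR=yR$, say $b=yw$, so that $yab=ya(yw)=(yay)w=yw=b$; symmetrically $c\in Rc=Ry$, say $c=zy$, gives $cay=(zy)ay=z(yay)=zy=c$. This establishes all four defining conditions.

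I do not expect a serious obstacle here: the whole argument is bookkeeping with left and right ideals, organized by the symmetry $b\leftrightarrow c$, right $\leftrightarrow$ left. The one point requiring a little care is the direction of each collapse — recognizing that $cay=c$ must be fed in from the right to turn $ysc$ into $yay$, and dually that $yab=b$ is fed in from the left — and remembering to use unitality of $R$ to guarantee $y\in yR$ and $y\in Ry$ before passing to $bR$ and $Rc$.
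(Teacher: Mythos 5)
Your proof is correct and complete; the paper itself gives no proof of this lemma (it is quoted from Drazin's paper), and your argument is precisely the standard verification one would expect there, with the key collapses $ysc=ys(cay)=(ysc)ay=yay$ and $yab=ya(yw)=(yay)w=b$ carried out correctly in both directions.
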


\begin{lemma} \emph{\cite[Theorem 2.2]{Dc}}  \label{abcirg}
Let $a,b,c\in R$. Then there exists at least one $(b,c)$-inverse of $a$ if and only if $b\in Rcab$ and $c\in cabR$.
\end{lemma}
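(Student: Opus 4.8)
The plan is to establish both directions through the equivalent description of the $(b,c)$-inverse furnished by Lemma~\ref{abcire}, namely that $y$ is the $(b,c)$-inverse of $a$ if and only if $yay=y$, $yR=bR$ and $Ry=Rc$.

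For the necessity, suppose $y=\bc{a}$ exists. From the defining relations~\eqref{abcira} I would directly use $yab=b$ and $cay=c$, together with the inclusions $y\in bRy\subseteq bR$ and $y\in yRc\subseteq Rc$. Writing $y=bu$ and $y=sc$ for suitable $u,s\in R$, substitution then gives $b=yab=s(cab)\in Rcab$ and $c=cay=(cab)u\in cabR$. This half is a short, forward computation.

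The real content is the sufficiency. Assuming $b=p\,cab$ and $c=cab\,q$ for some $p,q\in R$, the first step I would carry out is to show that $cab$ is regular: substituting both expressions into the product yields $cab=(cab\,q)a(p\,cab)=cab\,(qap)\,cab$, so $qap$ is an inner inverse of $cab$. I would then pass to a reflexive inner inverse $h$ of $cab$ (for example $h=(qap)(cab)(qap)$) and take as candidate $y=bhc$. Verifying the three conditions of Lemma~\ref{abcire} is now mechanical: $yay=b\,(h\,cab\,h)\,c=bhc=y$ by reflexivity; $yR\subseteq bR$ and $Ry\subseteq Rc$ are clear from $y=bhc$; and the reverse inclusions follow once I check $yab=b$ and $cay=c$, which reduce to $p\,(cab\,h\,cab)=p\,cab=b$ and $(cab\,h\,cab)\,q=cab\,q=c$ using $cab\,h\,cab=cab$.

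I expect the main obstacle to be conceptual rather than computational: recognising that the two factorisation hypotheses $b\in Rcab$ and $c\in cabR$ secretly force $cab$ to be regular, and that the expression $y=bhc$ built from a reflexive inner inverse of $cab$ is the correct candidate. The one technical point requiring attention is to insist on a \emph{reflexive} inner inverse rather than an arbitrary one, since reflexivity is exactly what delivers the idempotent identity $yay=y$.
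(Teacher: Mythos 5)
Your proof is correct. Note that the paper itself offers no proof of this statement --- it is quoted verbatim from Drazin as \cite[Theorem 2.2]{Dc} --- so there is nothing in-paper to compare line by line; however, your construction $y=bhc$ with $h$ an inner inverse of $cab$ is exactly the mechanism the paper uses elsewhere (the candidate $x=b(cab)^-c$ in Lemma~\ref{ats2innera}, Lemma~\ref{ats2iannnera} and Theorem~\ref{informuast2a}), so you are on the intended track. Both halves of your argument check out: the necessity is the short substitution you describe, and in the sufficiency the identity $cab=(cab\,q)a(p\,cab)=cab\,(qap)\,cab$ correctly extracts regularity of $cab$ from the two factorisation hypotheses. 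One small simplification: you do not actually need a \emph{reflexive} inner inverse. For an arbitrary $h\in(cab)\{1\}$ and $y=bhc$ one already has $b\,h\,cab=p\,cab\,h\,cab=p\,cab=b$, hence $yay=b\,h\,(cab)\,h\,c=(b\,h\,cab)\,h\,c=b\,h\,c=y$; the relation $yay=y$ comes from $yab=b$ rather than from $h\,cab\,h=h$. This is why the paper can work throughout with an arbitrary $(cab)^-$.
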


\begin{lemma} \emph{\cite[Proposition 3.3]{WCC}} \label{abcirf}
Let $a,b,c\in R$. If there exists a $(b,c)$-inverse of $a$, then $cab$, $b$ and $c$ are regular.
\end{lemma}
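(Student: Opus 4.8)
The plan is to treat the regularity of $b$ and $c$ separately from that of $cab$, drawing on the clean characterizations already assembled in the Preliminaries. For $b$ and $c$, I would first invoke Lemma~\ref{abcire}: if $y$ denotes the $(b,c)$-inverse of $a$, then $yay=y$, $yR=bR$ and $Ry=Rc$. The equation $yay=y$ says exactly that $a$ is an inner inverse of $y$, so $y$ is regular. Since $yR=bR$ with $y$ regular, Lemma~\ref{pirregulara}(1) forces $b$ to be regular; symmetrically, $Ry=Rc$ together with Lemma~\ref{pirregulara}(2) forces $c$ to be regular. This disposes of two of the three assertions with essentially no computation.

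For the regularity of $cab$, I would exploit the existence criterion of Lemma~\ref{abcirg}: the presence of a $(b,c)$-inverse of $a$ is equivalent to $b\in Rcab$ and $c\in cabR$. Thus I can write $b=p\,cab$ and $c=cab\,q$ for suitable $p,q\in R$. The key idea is that these two factorizations recombine to produce an explicit inner inverse of $cab$. Indeed, setting $z=qap$ I would verify
$$(cab)\,z\,(cab)=(cab\,q)\,a\,(p\,cab)=c\,a\,b=cab,$$
so that $cab\,(qap)\,cab=cab$ and hence $cab$ is regular.

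The first paragraph is routine once Lemmas~\ref{abcire} and~\ref{pirregulara} are in hand, so I do not expect any difficulty there. The only real content lies in the second paragraph: recognizing that the one-sided divisibility conditions $b\in Rcab$ and $c\in cabR$ supplied by Lemma~\ref{abcirg} are precisely the ingredients needed to manufacture an inner inverse of $cab$, and then choosing the correct combination $qap$ so that the outer factors $cab\,q$ and $p\,cab$ collapse to $c$ and $b$ respectively. Once this candidate is identified the verification is a one-line associativity calculation, so I anticipate the main (and rather modest) obstacle to be spotting the right choice $z=qap$ rather than completing any lengthy argument.
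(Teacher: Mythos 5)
Your argument is correct. Note that the paper itself states this lemma as a citation of \cite[Proposition 3.3]{WCC} and gives no proof, so there is no in-paper argument to compare against; but your proof is valid and self-contained, using only material already collected in the Preliminaries. The regularity of $b$ and $c$ follows exactly as you say from Lemma~\ref{abcire} (which gives $yay=y$, $yR=bR$, $Ry=Rc$, so $y$ is regular with inner inverse $a$) together with the two parts of Lemma~\ref{pirregulara}. For $cab$, Lemma~\ref{abcirg} supplies $b=p\,cab$ and $c=cab\,q$, and your candidate $z=qap$ does the job: $(cab)(qap)(cab)=(cab\,q)\,a\,(p\,cab)=cab$, so $cab$ is regular. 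This is a clean, complete derivation of the cited result from the paper's own toolkit.
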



\begin{lemma} \emph{\cite[Proposition 2.7]{KCC}} \label{abcirl}
Let $a,b,c\in R$. Then the following are equivalent:
\begin{itemize}
\item[{\rm (1)}] $a$ is $(b,c)$-invertible;
\item[{\rm (2)}] $c\in R^-$, $a^{\circ} \cap bR=\{0\}$ and $R=abR\oplus c^{\circ}$;
\item[{\rm (3)}] $b\in R^-$, ${}^{\circ}a\cap Rc=\{0\}$ and $R=Rca\oplus {}^{\circ}b$.
\end{itemize}
\end{lemma}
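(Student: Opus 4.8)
The plan is to prove $(1)\Rightarrow(2)$ and $(2)\Rightarrow(1)$ directly, and then to obtain $(1)\Leftrightarrow(3)$ "for free" by passing to the opposite ring. Throughout I shall lean on Lemma~\ref{abcire}, which says that $y=\bc{a}$ precisely when $yay=y$, $yR=bR$ and $Ry=Rc$: this converts the whole statement into assertions about one-sided ideals, idempotents, and annihilators, which are exactly what conditions $(2)$ and $(3)$ speak about.

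For $(1)\Rightarrow(2)$ I would start from $y=\bc a$ and put $e:=ay$, which is idempotent because $yay=y$. Regularity of $c$ is then immediate from Lemma~\ref{abcirf}. Using $yR=bR$ I get $abR=a(yR)=(ay)R=eR$, and $a^{\circ}\cap bR=a^{\circ}\cap yR=\{0\}$ is exactly Lemma~\ref{pirdcca}(3). It remains to identify $c^{\circ}$ with the kernel $(1-e)R$: from $Ry=Rc$ and Lemma~\ref{annihilator}(2) (both inclusions, needing no regularity) I get $c^{\circ}=y^{\circ}$, and $yay=y$ forces $y^{\circ}=(ay)^{\circ}=e^{\circ}$, since $ayx=0$ implies $yx=y(ayx)=0$. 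Hence $R=eR\oplus e^{\circ}=abR\oplus c^{\circ}$, which is $(2)$.

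The substantial direction is $(2)\Rightarrow(1)$, and this is where I expect the real work. The decomposition $R=abR\oplus c^{\circ}$ yields an idempotent $e$ with $eR=abR$ and $(1-e)R=c^{\circ}$: writing $1=e+f$ along the sum and using that $abR$ is a \emph{right} ideal, one checks that $e$ is idempotent and $e^{\circ}=c^{\circ}$. Since $e\in abR$ I may fix $r_{0}$ with $e=abr_{0}$, and $ab\in eR$ gives $eab=ab$, that is $abr_{0}ab=ab$. Regularity of $c$ together with $e^{\circ}=c^{\circ}$ and Lemma~\ref{annihilator}(2), applied once in each direction to the relevant regular element ($e$ and $c$ are both regular), upgrades this annihilator equality to $Re=Rc$. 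I then propose the explicit candidate $y:=br_{0}$ and verify the three conditions of Lemma~\ref{abcire}. The decisive use of the hypothesis $a^{\circ}\cap bR=\{0\}$ is that $b-br_{0}ab$ lies in $bR$ and is annihilated by $a$, because $a(b-br_{0}ab)=ab-abr_{0}ab=0$; hence $b-br_{0}ab=0$, giving $b=br_{0}ab=yab$. From here $ye=br_{0}\cdot abr_{0}=(br_{0}ab)r_{0}=br_{0}=y$, so $yay=ye=y$; moreover $b=yab\in yR$ together with $y\in bR$ gives $yR=bR$, while $e=ay\in Ry$ and $y=ye\in Re$ give $Ry=Re=Rc$. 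By Lemma~\ref{abcire}, $y=\bc a$, so $a$ is $(b,c)$-invertible.

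Finally, $(1)\Leftrightarrow(3)$ is the left--right mirror of $(1)\Leftrightarrow(2)$. Passing to the opposite ring $R^{\mathrm{op}}$ interchanges $aR\leftrightarrow Ra$ and $a^{\circ}\leftrightarrow{}^{\circ}a$, and, reading Lemma~\ref{abcire} in $R^{\mathrm{op}}$, one sees that $a$ is $(b,c)$-invertible in $R$ if and only if $a$ is $(c,b)$-invertible in $R^{\mathrm{op}}$. Rewriting condition $(2)$ for the pair $(c,b)$ inside $R^{\mathrm{op}}$ then translates verbatim into "$b$ regular, ${}^{\circ}a\cap Rc=\{0\}$ and $R=Rca\oplus{}^{\circ}b$", which is exactly $(3)$. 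The only genuine obstacle is the construction of $y$ in $(2)\Rightarrow(1)$: once the idempotent $e=abr_{0}$ and the identity $Re=Rc$ are in hand, the annihilator hypothesis $a^{\circ}\cap bR=\{0\}$ performs all the remaining cancellation.
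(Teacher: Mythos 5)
The paper does not prove this lemma at all: it is imported verbatim as \cite[Proposition 2.7]{KCC}, so there is no internal proof to compare against. Your argument is, as far as I can check, complete and correct, and it is self-contained modulo only the other quoted preliminaries (Lemmas~\ref{annihilator}, \ref{pirdcca}, \ref{abcirf} and the characterization in Lemma~\ref{abcire}). The forward direction via the idempotent $e=ay$ and the identifications $abR=eR$, $c^{\circ}=y^{\circ}=e^{\circ}=(1-e)R$ is clean; in the converse you correctly extract an idempotent $e=abr_{0}$ from the direct sum $R=abR\oplus c^{\circ}$ (both summands being right ideals is what makes $e^{2}=e$ and $e^{\circ}=c^{\circ}$ work), you use Lemma~\ref{annihilator}(2) in both directions with the regularity of $c$ and of the idempotent $e$ to get $Re=Rc$, and the hypothesis $a^{\circ}\cap bR=\{0\}$ is deployed exactly where it must be, to cancel $b-br_{0}ab$. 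The candidate $y=br_{0}$ then satisfies $yay=ye=y$, $yR=bR$ and $Ry=Re=Rc$, so Lemma~\ref{abcire} applies; and the passage to $R^{\mathrm{op}}$ for $(1)\Leftrightarrow(3)$ is legitimate since the defining conditions of Lemma~\ref{abcire} dualize to the $(c,b)$-inverse in the opposite ring. It is worth noting that your explicit construction $y=br_{0}$ with $e=abr_{0}$ is very much in the spirit of the expressions $b(cab)^{-}c$ that the paper develops later (Lemmas~\ref{ats2innera}--\ref{ats2iannnera} and Theorem~\ref{informuast2a}), so your proof fits naturally into the paper's own toolkit even though the authors chose to cite the result instead.
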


By \cite[Theorem 2.9]{KCC} and the definitions of hybrid $(b,c)$-inverse and annihilator $(b,c)$-inverse, we have the following lemma.
\begin{lemma} \label{abcirn}
Let $a,b,c,y\in R$. Then the following are equivalent:
\begin{itemize}
\item[{\rm (1)}] $y$ is the $(b,c)$-inverse of $a$;
\item[{\rm (2)}] $c\in R^-$, $y$ is the hybrid $(b,c)$-inverse of $a$;
\item[{\rm (3)}] $b,c\in R^-$, $y$ is the annihilator $(b,c)$-inverse of $a$.
\end{itemize}
\end{lemma}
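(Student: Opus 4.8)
The plan is to reduce both equivalences to the internal characterization of the $(b,c)$-inverse furnished by Lemma \ref{abcire}, namely that (1) holds precisely when $yay=y$, $yR=bR$ and $Ry=Rc$, and then to trade the one-sided ideal equalities for annihilator equalities using Lemma \ref{annihilator}. This gives a self-contained argument resting only on the earlier lemmas, without invoking \cite[Theorem 2.9]{KCC} directly. The two further ingredients I would use are the regularity statement of Lemma \ref{abcirf} and the elementary remark that $yay=y$ already exhibits $a$ as an inner inverse of $y$, so $y\in R^-$ automatically.

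For (1) $\Leftrightarrow$ (2) I would argue as follows. Assuming (1), Lemma \ref{abcire} supplies $yay=y$ and $yR=bR$ outright, while Lemma \ref{abcirf} gives $c\in R^-$; the remaining condition $Ry=Rc$ passes to $y^{\circ}=c^{\circ}$ by applying Lemma \ref{annihilator}(2) to each of the inclusions $Ry\subseteq Rc$ and $Rc\subseteq Ry$, and this direction costs no regularity. Conversely, from (2) I have $yay=y$ (hence $y$ regular), $yR=bR$, $y^{\circ}=c^{\circ}$ and $c\in R^-$; the converse half of Lemma \ref{annihilator}(2) then recovers $Ry=Rc$, using regularity of $c$ to get $Ry\subseteq Rc$ and regularity of $y$ to get $Rc\subseteq Ry$. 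Feeding $yay=y$, $yR=bR$, $Ry=Rc$ back into Lemma \ref{abcire} yields (1).

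The equivalence (1) $\Leftrightarrow$ (3) runs in exactly parallel fashion, except that the ideal equality $yR=bR$ is now also exchanged for the annihilator equality ${}^{\circ}y={}^{\circ}b$ via Lemma \ref{annihilator}(1). In the direction (1) $\Rightarrow$ (3) both $b$ and $c$ are regular by Lemma \ref{abcirf}, and $yR=bR$, $Ry=Rc$ translate into ${}^{\circ}y={}^{\circ}b$ and $y^{\circ}=c^{\circ}$ with no regularity required. For (3) $\Rightarrow$ (1) the hypotheses $b,c\in R^-$ together with the regularity of $y$ coming from $yay=y$ are precisely what the converse halves of Lemma \ref{annihilator} demand in order to turn ${}^{\circ}y={}^{\circ}b$ back into $yR=bR$ and $y^{\circ}=c^{\circ}$ back into $Ry=Rc$, after which Lemma \ref{abcire} closes the loop.

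The main obstacle, and really the only delicate point, is the bookkeeping of regularity hypotheses: the passage from one-sided ideals to annihilators is always free, whereas the converse in Lemma \ref{annihilator} requires the relevant element to be regular. One must verify that the asymmetry in the statement is correct, namely that (2) needs only $c\in R^-$ (because $yR=bR$ is retained intact and only $c^{\circ}$ must be converted) while (3) needs both $b,c\in R^-$ (because both annihilators must be converted), with the regularity of $y$ supplied automatically by $yay=y$ in every case.
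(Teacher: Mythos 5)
Your proof is correct. Every step checks out: the translation between $Ry=Rc$ and $y^{\circ}=c^{\circ}$ (and between $yR=bR$ and ${}^{\circ}y={}^{\circ}b$) is handled with exactly the right regularity hypotheses, the forward directions of Lemma~\ref{annihilator} being free and the converses drawing on $c\in R^-$, $b\in R^-$, or the regularity of $y$ that $yay=y$ provides, and Lemma~\ref{abcirf} correctly supplies the regularity claims in the directions $(1)\Rightarrow(2)$ and $(1)\Rightarrow(3)$. The route is genuinely different from the paper's, but only in the sense that the paper gives no proof at all: it simply asserts the lemma as a consequence of \cite[Theorem 2.9]{KCC} together with the definitions of the hybrid and annihilator $(b,c)$-inverses. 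Your argument replaces that external citation with a short self-contained derivation from Lemma~\ref{abcire}, Lemma~\ref{abcirf} and Lemma~\ref{annihilator}, all of which are already stated in the paper; this is arguably preferable, since it makes the logical dependence explicit and isolates precisely where each regularity hypothesis is consumed, which is the one delicate point you correctly flag. The only cost is a slightly longer exposition than the one-line citation the authors chose.
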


In \cite[Theorem 2.11]{KCC}, the authors gave a generalization of \cite[Theorem 2.1]{Wh}.
By Lemma~\ref{pirgroupa} and \cite[Theorem 2.11]{KCC}, we have the following lemma.
\begin{lemma} \label{ats2ringgroupa}
Let $a,b,c,d\in R$, $dR=bR$ and $d^{\circ}=c^{\circ}$. If $a$ is $(b,c)$-invertible, then
$ad$ and $da$ are group invertible. Furthermore, we have
\begin{equation} \label{ats2ringgroupc}
\bc{a}=\along{a}=d(ad)^\#=(da)^\#d.
\end{equation}
\end{lemma}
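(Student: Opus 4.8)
The plan is to show that the stated hypotheses force the $(b,c)$-inverse of $a$ to coincide with the $(d,d)$-inverse of $a$, i.e. with the inverse $\along{a}$ along $d$; once this identification is secured, the group invertibility of $ad$ and $da$ together with the two product formulas are read off directly from Lemma~\ref{alongd}. So the real work is the identification $\bc{a}=\along{a}$, and the rest is a citation.

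First I would collect the regularity facts coming from $(b,c)$-invertibility. By Lemma~\ref{abcirf} both $b$ and $c$ are regular. Since $dR=bR$ and $b$ is regular, Lemma~\ref{pirregulara}(1) gives that $d$ is regular as well. Thus $c$ and $d$ are \emph{both} regular, which is exactly what is needed to upgrade the annihilator hypothesis $d^{\circ}=c^{\circ}$ into an equality of principal left ideals. Concretely, I would apply Lemma~\ref{annihilator}(2) twice: with $d$ regular the inclusion $d^{\circ}\subseteq c^{\circ}$ yields $Rc\subseteq Rd$, and with $c$ regular the inclusion $c^{\circ}\subseteq d^{\circ}$ yields $Rd\subseteq Rc$; together these give $Rc=Rd$.

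With $Rc=Rd$ in hand the identification is immediate. Let $y:=\bc{a}$. By Lemma~\ref{abcire} we have $yay=y$, $yR=bR$ and $Ry=Rc$. Replacing $bR$ by $dR$ (the hypothesis $dR=bR$) and $Rc$ by $Rd$ (just proved) gives $yay=y$, $yR=dR$ and $Ry=Rd$, which by Lemma~\ref{abcire} read with $b=c=d$ says precisely that $y$ is the $(d,d)$-inverse of $a$. Since the inverse along $d$ \emph{is} the $(d,d)$-inverse, this proves $\bc{a}=\along{a}$.

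Finally, the existence of $\along{a}$ lets me invoke Lemma~\ref{alongd}: statements (2) and (3) there guarantee that $(da)^\#$ and $(ad)^\#$ exist, i.e. $ad$ and $da$ are group invertible, and the closing formula of that lemma gives $\along{a}=d(ad)^\#=(da)^\#d$. Chaining the equalities yields \eqref{ats2ringgroupc}. The only genuinely delicate step is the reduction $d^{\circ}=c^{\circ}\Rightarrow Rc=Rd$: it is not automatic from the annihilator condition alone and relies essentially on the regularity of both $c$ and $d$, the regularity of $d$ in turn being extracted from $dR=bR$ via Lemma~\ref{pirregulara}. Everything past the identification $\bc{a}=\along{a}$ is a direct application of Lemma~\ref{alongd}.
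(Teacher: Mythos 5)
Your proposal is correct and follows essentially the same route as the paper: regularity of $b,c$ from Lemma~\ref{abcirf}, regularity of $d$ from $dR=bR$ via Lemma~\ref{pirregulara}, upgrading $d^{\circ}=c^{\circ}$ to $Rd=Rc$ by Lemma~\ref{annihilator}, identifying $\bc{a}$ with the $(d,d)$-inverse, and finishing with Lemma~\ref{alongd}. The only cosmetic difference is that the paper cites Mary's Lemma~3 for the last identification step while you read Lemma~\ref{abcire} with $b=c=d$; these are interchangeable.
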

\begin{proof}
Since $a$ is $(b,c)$-invertible, then $b$ and $c$ are regular by Lemma~\ref{abcirf}.
By $dR=bR$ and Lemma~\ref{pirregulara}, we have $d$ is regular, thus
$d^{\circ}=c^{\circ}$ if and only if $Rd=Rc$.
Let $x$ be the $(b,c)$-inverse of $a$. Then $xax=x$, $xR=bR$ and $Rx=Rc$.
Thus $xax=x$, $xR=dR$ and $Rx=Rd$, which implies that $a$ is invertible along $d$ by \cite[Lemma 3]{Mb}.
Therefore, the proof is finished by Lemma~\ref{alongd}.
\end{proof}

\section { \bf Existence criteria of the $(b,c)$-inverses and its applications}

In this section, necessary and sufficient conditions of the $(b,c)$-invertibility are given and we present explicit expressions for the $(b,c)$-inverse by using inner inverses.
In Theorem~\ref{bcuva}, we will give a generalization of the well-known results in \cite[Theorem 2.1]{Wh}.
We answer the question when the $(b,c)$-inverse of $a$ is an inner inverse of $a$.
In Theorem~\ref{inofbca} and Theorem~\ref{inofbcdua}, we will give a unified theory of some well-known results
of the $\{1,3\}$-inverse, the $\{1,4\}$-inverse, the Moore-Penrose inverse, the group inverse and the core inverse.

\begin{theorem} \label{anihilata}
Let $a,b,c\in R$. Then the following are equivalent:
\begin{itemize}
\item[{\rm (1)}] $a$ is $(b,c)$-invertible;
\item[{\rm (2)}] $c\in R^-$, $(ab)^{\circ}=b^{\circ}$ and $R=abR\oplus c^{\circ}$;
\item[{\rm (3)}] $b\in R^-$, ${}^{\circ}(ca)={}^{\circ}c$ and $R=Rca\oplus {}^{\circ}b$.
\end{itemize}
\end{theorem}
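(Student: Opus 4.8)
The plan is to derive Theorem~\ref{anihilata} directly from Lemma~\ref{abcirl}, since the two statements differ only in their middle conditions. Comparing item~(2) here with item~(2) of Lemma~\ref{abcirl}, the outer requirements $c\in R^-$ and $R=abR\oplus c^{\circ}$ are literally identical, so the whole task reduces to showing that, with no extra hypotheses, $(ab)^{\circ}=b^{\circ}$ is equivalent to $a^{\circ}\cap bR=\{0\}$. By symmetry, the equivalence of item~(3) reduces to showing that ${}^{\circ}(ca)={}^{\circ}c$ is equivalent to ${}^{\circ}a\cap Rc=\{0\}$. Thus the proof is purely an exercise in reformulating annihilator conditions, and Lemma~\ref{abcirl} supplies everything else.

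First I would record the trivial inclusion $b^{\circ}\subseteq (ab)^{\circ}$, which holds because $bx=0$ forces $abx=0$. Consequently $(ab)^{\circ}=b^{\circ}$ is equivalent to the reverse inclusion $(ab)^{\circ}\subseteq b^{\circ}$, i.e.\ to the cancellation property that $abx=0$ implies $bx=0$ for every $x\in R$. On the other hand, $a^{\circ}\cap bR=\{0\}$ asserts exactly that any element of the form $bx$ killed on the left by $a$ must vanish, which is the same implication $abx=0\Rightarrow bx=0$. I would make this precise as two inclusions: if $a^{\circ}\cap bR=\{0\}$ and $abx=0$, then $bx\in a^{\circ}\cap bR=\{0\}$, so $x\in b^{\circ}$; conversely, if $(ab)^{\circ}\subseteq b^{\circ}$ and $z=bx\in a^{\circ}$, then $abx=0$ gives $bx=0$, so $z=0$. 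This establishes $(ab)^{\circ}=b^{\circ}\Leftrightarrow a^{\circ}\cap bR=\{0\}$.

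The left-hand version is obtained by reading every product in the opposite order. Using ${}^{\circ}c\subseteq {}^{\circ}(ca)$ (since $xc=0\Rightarrow xca=0$), the equality ${}^{\circ}(ca)={}^{\circ}c$ is equivalent to the implication $xca=0\Rightarrow xc=0$, which is precisely the statement ${}^{\circ}a\cap Rc=\{0\}$. Substituting these two reformulations into the conditions of Lemma~\ref{abcirl} turns its item~(2) into our item~(2) and its item~(3) into our item~(3), yielding the desired three-way equivalence.

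I do not anticipate any genuine obstacle: the substance is the single observation that $(ab)^{\circ}=b^{\circ}$ and $a^{\circ}\cap bR=\{0\}$ encode the identical cancellation property $abx=0\Rightarrow bx=0$. The only point deserving a remark is that these annihilator identities hold unconditionally, requiring neither regularity nor any other side hypothesis, so they may be inserted freely into both directions of Lemma~\ref{abcirl} without disturbing the remaining conditions.
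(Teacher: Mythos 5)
Your proposal is correct and follows essentially the same route as the paper: both reduce the theorem to Lemma~\ref{abcirl} by observing that $(ab)^{\circ}=b^{\circ}$ and $a^{\circ}\cap bR=\{0\}$ encode the same cancellation $abx=0\Rightarrow bx=0$ (and dually for the left-sided version). The only cosmetic difference is that the paper derives $(ab)^{\circ}\subseteq b^{\circ}$ from the identity $b=yab$ satisfied by the $(b,c)$-inverse $y$, whereas you obtain it as one half of an unconditional equivalence with $a^{\circ}\cap bR=\{0\}$; your formulation is, if anything, slightly more economical.
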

\begin{proof}
$(1)\Rightarrow (2)$. By Lemma~\ref{abcirl}, we have $c\in R^-$ and $R=abR\oplus c^{\circ}$.
Let $y$ be the $(b,c)$-inverse of $a$, then $b=yab$.
For arbitrary $u\in (ab)^{\circ}$, we have $bu=yabu=0$, which implies that $(ab)^{\circ}\subseteq b^{\circ}$.
Thus $(ab)^{\circ}=b^{\circ}$ because $b^{\circ}\subseteq (ab)^{\circ}$ is trivial.

$(2)\Rightarrow (1)$.
Let  $v\in a^{\circ} \cap bR$.
Then $av=0$ and $v=br$ for some $r\in R$.
Thus $abr=av=0$, that is $r\in (ab)^{\circ}$.
The condition $(ab)^{\circ}=b^{\circ}$ gives that $r\in b^{\circ}$,
then $v=br=0$.
Therefore, $a$ is $(b,c)$-invertible by Lemma~\ref{abcirl}.

The proof of $(1)\Leftrightarrow(3)$ is similar to the proof of $(1)\Leftrightarrow(2)$.
\end{proof}

Let $a\in R$. By \cite[p.1910]{Dc}, we have that
$a$ is Moore-Penrose invertible if and only if $a$ is $(a^\ast,a^\ast)$-invertible,
$a$ is Drazin invertible if and only if exists $k\in \mathbb{N}$ such that $a$ is $(a^k,a^k)$-invertible and
$a$ is group invertible if and only if $a$ is $(a,a)$-invertible,
By \cite[Theorem 4.4]{RDDb}, we have the $(a,a^\ast)$-inverse coincides with the core inverse of $a$ and
the $(a^\ast,a)$-inverse coincides with the dual core inverse of $a$.
Thus, by Theorem~\ref{anihilata}, we can get corresponding results of the Moore-Penrose inverse, Drazin inverse,
core inverse and dual core inverse. Leaving the deeper details to the reader to research.

The following three lemmas will be useful in the sequel.
\begin{lemma} \label{ats2innera}
Let $a,b,c\in R$ such that $cab$ is regular. Let $x=b(cab)^-c$ for arbitrary $(cab)^-\in (cab)\{1\}$.
Then the following are equivalent:
\begin{itemize}
\item[{\rm (1)}] $xax=x$ and $bR=xR$;
\item[{\rm (2)}] $xax=x$ and $bR\subseteq xR$;
\item[{\rm (3)}] $Rb=Rcab$;
\item[{\rm (4)}] $b\in R^-$ and $b^{\circ}=(cab)^{\circ}$.
\end{itemize}
\end{lemma}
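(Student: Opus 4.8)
The plan is to establish the cheap equivalences $(1)\Leftrightarrow(2)$ and $(3)\Leftrightarrow(4)$ first, and then to build the bridge between the two pairs by proving $(2)\Rightarrow(3)$ and $(3)\Rightarrow(1)$; together these close all four equivalences. Writing $w=cab$ for brevity, so that $x=bw^{-}c$ and $ww^{-}w=w$, I would begin by recording the three inclusions that hold automatically: since $x=b(w^{-}c)$ we always have $xR\subseteq bR$; since $cab=(ca)b$ we always have $Rcab\subseteq Rb$; and $bu=0$ forces $cabu=0$, so $b^{\circ}\subseteq(cab)^{\circ}$ always. These trivial facts carry most of the bookkeeping. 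The first gives $(1)\Leftrightarrow(2)$ at once, because under $xax=x$ the equality $bR=xR$ is the same as the inclusion $bR\subseteq xR$. The second reduces $(3)$ to the single inclusion $b\in Rcab$, and the third reduces the annihilator condition in $(4)$ to $(cab)^{\circ}\subseteq b^{\circ}$.

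Next I would dispatch $(3)\Leftrightarrow(4)$ entirely from the preliminaries, exploiting that $cab$ is regular by hypothesis. For $(3)\Rightarrow(4)$: from $Rb=Rcab$ with $cab$ regular, Lemma~\ref{pirregulara}(2) gives that $b$ is regular, while the two inclusions $Rb\subseteq Rcab$ and $Rcab\subseteq Rb$ give $(cab)^{\circ}\subseteq b^{\circ}$ and $b^{\circ}\subseteq(cab)^{\circ}$ by Lemma~\ref{annihilator}(2), so $b^{\circ}=(cab)^{\circ}$. For $(4)\Rightarrow(3)$: from $b^{\circ}=(cab)^{\circ}$ we have $(cab)^{\circ}\subseteq b^{\circ}$, and since $cab$ is regular the converse half of Lemma~\ref{annihilator}(2) yields $Rb\subseteq Rcab$; combined with the free inclusion $Rcab\subseteq Rb$ this gives $Rb=Rcab$.

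The computational core is the pair $(3)\Rightarrow(1)$ and $(2)\Rightarrow(3)$. For $(3)\Rightarrow(1)$, I would write $b=tw$ from $b\in Rb=Rcab$; then $bw^{-}w=tww^{-}w=t(ww^{-}w)=tw=b$, so $xax=bw^{-}ww^{-}c=(bw^{-}w)w^{-}c=bw^{-}c=x$ and $xab=bw^{-}cab=bw^{-}w=b\in xR$, giving $bR\subseteq xR$ and hence $bR=xR$. The delicate implication is $(2)\Rightarrow(3)$, and this is the step I expect to be the main obstacle: $(2)$ is a purely right-handed hypothesis ($bR\subseteq xR$ together with $xax=x$), whereas $(3)$ is the left-handed statement $Rb\subseteq Rcab$, so the two sides must somehow be linked. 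The trick I would use is that $b\in xR$ lets me write $b=xs$, whence $xab=xa(xs)=(xax)s=xs=b$; but computing directly, $xab=bw^{-}cab=(bw^{-})(cab)$, so $b=xab=(bw^{-})(cab)\in Rcab$, which is exactly $Rb\subseteq Rcab$. Once the identity $b=xab=(bw^{-})(cab)$ is spotted, the left-sided conclusion drops out immediately; locating this identity is essentially the only real content of the argument.
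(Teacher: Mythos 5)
Your proof is correct and follows essentially the same route as the paper: the key identities $b=xab=b(cab)^-cab$ for $(2)\Rightarrow(3)$ and $b=b(cab)^-cab=xab$ for $(3)\Rightarrow(1)$ are exactly the paper's, and $(3)\Leftrightarrow(4)$ is handled via Lemma~\ref{annihilator} and Lemma~\ref{pirregulara} in both. The only (immaterial) difference is that you extract the regularity of $b$ directly from $Rb=Rcab$ and the regularity of $cab$, whereas the paper routes it through the already-proved equivalence with $(1)$ and the regularity of $x$.
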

\begin{proof}
$(1)\Rightarrow (2)$ is trivial.

$(2)\Rightarrow (3)$. Suppose that $xax=x$ and $bR\subseteq xR$. Then $b=xab=b(cab)^-cab\in Rcab$,
thus $Rb=Rcab$.

$(3)\Rightarrow (1)$. Since $Rb=Rcab$ and $cab$ is regular, then
\begin{equation} \label{ats2innerbb}
b=b(cab)^-cab=[b(cab)^-c]ab=xab.
\end{equation}
By $(\ref{ats2innerbb})$ and $x=b(cab)^-c$, we have
\begin{equation*}
xax=xab(cab)^-c=b(cab)^-c=x~,xR\subseteq bR~\text{and}~bR\subseteq xR.
\end{equation*}
Thus, $xax=x$ and $bR=xR$.

$(3)\Leftrightarrow (4)$. Since $(1)\Leftrightarrow (3)$ and $x$ is regular, we have that $b$ is regular by
Lemma~\ref{pirregulara}. Thus $(3)\Leftrightarrow (4)$ by Lemma~\ref{annihilator} and the regularity of $cab$.
\end{proof}

The following lemma is the corresponding result of Lemma~\ref{ats2innera}.
\begin{lemma} \label{ats2innerabdd}
Let $a,b,c\in R$ such that $cab$ is regular. Let $x=b(cab)^-c$ for arbitrary $(cab)^-\in (cab)\{1\}$.
Then the following are equivalent:
\begin{itemize}
\item[{\rm (1)}] $xax=x$ and $Rx=Rc$;
\item[{\rm (2)}] $xax=x$ and $Rc\subseteq Rx$;
\item[{\rm (3)}] $cR=cabR$;
\item[{\rm (4)}] $c\in R^-$ and ${}^{\circ}c={}^{\circ}(cab)$.
\end{itemize}
\end{lemma}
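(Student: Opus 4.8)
The plan is to mirror the proof of Lemma~\ref{ats2innera} under the left--right dual, interchanging the roles of $b$ and $c$ and swapping left multiplication for right multiplication throughout. Write $x=b(cab)^-c$ as in the statement. I will establish the cycle $(1)\Rightarrow(2)\Rightarrow(3)\Rightarrow(1)$ and then the separate equivalence $(3)\Leftrightarrow(4)$. The implication $(1)\Rightarrow(2)$ requires nothing, since $Rx=Rc$ trivially forces $Rc\subseteq Rx$.

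For $(2)\Rightarrow(3)$ I would use $Rc\subseteq Rx$ to write $c=tx$ for some $t\in R$, and then absorb the relation $xax=x$ to obtain $cax=txax=tx=c$. Substituting $x=b(cab)^-c$ turns this into $c=cax=cab(cab)^-c\in cabR$, whence $cR\subseteq cabR$; the reverse inclusion $cabR\subseteq cR$ is immediate from $cab=c(ab)$, so $cR=cabR$. For $(3)\Rightarrow(1)$ I would first use $cR=cabR$ together with the regularity of $cab$ to deduce $c=cab(cab)^-c$, i.e. the single identity $c=cax$. Everything then follows from it: $xax=b(cab)^-(cax)=b(cab)^-c=x$, while $x=b(cab)^-c\in Rc$ gives $Rx\subseteq Rc$ and $c=cax\in Rx$ gives $Rc\subseteq Rx$, so that $Rx=Rc$.

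Finally, for $(3)\Leftrightarrow(4)$ I would observe that $xax=x$ already exhibits $x$ as a regular element (with $a$ an inner inverse). Combining the established equivalence $(1)\Leftrightarrow(3)$ with $Rx=Rc$, Lemma~\ref{pirregulara}(2) yields that $c$ is regular, which supplies the clause $c\in R^-$ of statement~(4). With both $c$ and $cab$ regular, Lemma~\ref{annihilator}(1) converts each of the two inclusions comprising $cR=cabR$ into the corresponding left-annihilator inclusion, giving $cR=cabR \iff {}^{\circ}c={}^{\circ}(cab)$, and hence $(3)\Leftrightarrow(4)$.

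Since the statement is genuinely the one-sided dual of Lemma~\ref{ats2innera}, there is no new conceptual difficulty; the only thing to watch is the bookkeeping of the dualization — making sure that every occurrence of ``$bR\subseteq xR$, $b=xab$, $Rb=Rcab$'' in the original proof is correctly transposed to ``$Rc\subseteq Rx$, $c=cax$, $cR=cabR$'', and that the annihilator entering~(4) is the \emph{left} annihilator ${}^{\circ}(\cdot)$ rather than the right annihilator $(\cdot)^{\circ}$. The pivotal computational fact on which the whole argument turns is the identity $c=cax$, extracted from $cR=cabR$ and the regularity of $cab$.
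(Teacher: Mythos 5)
Your proof is correct and is exactly what the paper intends: the paper gives no separate argument for this lemma, merely noting it is ``the corresponding result'' of Lemma~\ref{ats2innera}, and your left--right dualization (the cycle $(1)\Rightarrow(2)\Rightarrow(3)\Rightarrow(1)$ pivoting on $c=cax$, plus $(3)\Leftrightarrow(4)$ via Lemma~\ref{pirregulara} and Lemma~\ref{annihilator}) reproduces that proof faithfully. No issues.
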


\begin{lemma} \label{ats2iannnera}
Let $a,b,c\in R$ such that $cab$ is regular. Let $x=b(cab)^-c$ for arbitrary $(cab)^-\in (cab)\{1\}$.
Then the following are equivalent:
\begin{itemize}
\item[{\rm (1)}] $xax=x$ and $x^{\circ}=c^{\circ}$;
\item[{\rm (2)}] $xax=x$ and $x^{\circ}\subseteq c^{\circ}$;
\item[{\rm (3)}] $cR=cabR$.
\end{itemize}
\end{lemma}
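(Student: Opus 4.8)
The plan is to reduce everything to the already-established Lemma~\ref{ats2innerabdd}, whose condition (1) — namely $xax=x$ together with $Rx=Rc$ — is equivalent to statement (3) here; the bridge between the module equality $Rx=Rc$ and the annihilator condition $x^{\circ}=c^{\circ}$ will be supplied by Lemma~\ref{annihilator}(2). Two preliminary observations drive the argument. First, since $x=b(cab)^-c$, every element $rx$ equals $[rb(cab)^-]c$, so $Rx\subseteq Rc$ holds unconditionally; by the forward direction of Lemma~\ref{annihilator}(2) this already gives $c^{\circ}\subseteq x^{\circ}$ with no extra hypotheses. Second, whenever $xax=x$ the element $a$ is an inner inverse of $x$, so $x$ is regular — which is exactly what is needed to invoke the converse direction of Lemma~\ref{annihilator}(2).

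The implication $(1)\Rightarrow(2)$ is immediate. For $(2)\Rightarrow(3)$ I would assume $xax=x$ and $x^{\circ}\subseteq c^{\circ}$; regularity of $x$ (from $xax=x$) lets me apply the converse part of Lemma~\ref{annihilator}(2) to turn $x^{\circ}\subseteq c^{\circ}$ into $Rc\subseteq Rx$, which combined with the automatic inclusion $Rx\subseteq Rc$ yields $Rx=Rc$. Then $xax=x$ and $Rx=Rc$ are precisely condition (1) of Lemma~\ref{ats2innerabdd}, whence $cR=cabR$.

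For $(3)\Rightarrow(1)$ I would feed $cR=cabR$ into Lemma~\ref{ats2innerabdd} to obtain $xax=x$ and $Rx=Rc$. Splitting $Rx=Rc$ into its two inclusions and applying the forward direction of Lemma~\ref{annihilator}(2) to each gives $c^{\circ}\subseteq x^{\circ}$ and $x^{\circ}\subseteq c^{\circ}$, i.e.\ $x^{\circ}=c^{\circ}$, so (1) holds and the cycle closes.

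I expect no serious obstacle here, as the whole content is the replacement of the module condition $Rx=Rc$ of Lemma~\ref{ats2innerabdd} by the annihilator condition $x^{\circ}=c^{\circ}$. The only point requiring care is correctly matching the regularity hypothesis of Lemma~\ref{annihilator}(2) in the $(2)\Rightarrow(3)$ step: it is the regularity of $x$, furnished by $xax=x$, rather than that of $c$, that is used to convert the annihilator inclusion back into the module inclusion $Rc\subseteq Rx$.
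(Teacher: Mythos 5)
Your proposal is correct, but it takes a genuinely different route from the paper. The paper proves the lemma by direct computation: for $(2)\Rightarrow(3)$ it observes that $xax=x$ gives $1-ax\in x^{\circ}\subseteq c^{\circ}$, hence $c=cax=cab(cab)^-c\in cabR$; for $(3)\Rightarrow(1)$ it derives $c=cab(cab)^-c=cax$ from $cR=cabR$ and the regularity of $cab$, then checks $xax=x$ and both annihilator inclusions by hand (e.g.\ $v\in x^{\circ}$ implies $cv=caxv=0$). You instead route everything through Lemma~\ref{ats2innerabdd} (whose condition $xax=x$, $Rx=Rc$ is equivalent to $cR=cabR$) and use Lemma~\ref{annihilator}(2) to translate between the module condition $Rx=Rc$ and the annihilator condition $x^{\circ}=c^{\circ}$; you correctly identify that the converse direction of Lemma~\ref{annihilator}(2) needs the regularity of $x$, which is supplied by $xax=x$. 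Your argument is valid and conceptually cleaner in that it isolates the only real content (interchanging $Rx=Rc$ with $x^{\circ}=c^{\circ}$ under regularity), but it is less self-contained: it leans on Lemma~\ref{ats2innerabdd}, which the paper states only by duality with Lemma~\ref{ats2innera} and does not prove explicitly, whereas the paper's direct computation avoids that dependency. A minor remark: your preliminary observation that $c^{\circ}\subseteq x^{\circ}$ holds unconditionally is never actually used, since in $(3)\Rightarrow(1)$ you rederive both inclusions from $Rx=Rc$.
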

\begin{proof}
$(1)\Rightarrow (2)$ is trivial.

$(2)\Rightarrow (3)$. Suppose that $xax=x$ and $x^{\circ}\subseteq c^{\circ}$. Then $c=cax=cab(cab)^-c\in cabR$,
thus $cR=cabR$.

$(3)\Rightarrow (1)$. Since $cR=cabR$ and $cab$ is regular, then
\begin{equation} \label{ats2innerb}
c=cab(cab)^-c=ca[b(cab)^-c]=cax.
\end{equation}
By $(\ref{ats2innerb})$ and $x=b(cab)^-c$, we have
\begin{equation*}
xax=b(cab)^-cax=b(cab)^-c=x.
\end{equation*}
Let $u\in c^{\circ}$. Then $xu=b(cab)^-cu=0$, that is $c^{\circ}\subseteq x^{\circ}$.
Let $v\in x^{\circ}$. Then $cv=caxv=0$ by $(\ref{ats2innerb})$, that is $x^{\circ}\subseteq c^{\circ}$.
Thus, $xax=x$ and $x^{\circ}=c^{\circ}$.
\end{proof}

Thus, by Lemma~\ref{abcirn}, Lemma~\ref{ats2innera} and Lemma~\ref{ats2iannnera}, we have the following theorem,
in which we give an explicit expression for the $(b,c)$-inverse,
which reduces to the inner inverse.
\begin{theorem} \label{informuast2a}
Let $a,b,c\in R$ such that $cab$ is regular. Let $x=b(cab)^-c$ for arbitrary $(cab)^-\in (cab)\{1\}$.
Then the following are equivalent:
\begin{itemize}
\item[{\rm (1)}] $x$ is the $(b,c)$-inverse of $a$;
\item[{\rm (2)}] $xax=x$, $bR\subseteq xR$ and $x^{\circ}\subseteq c^{\circ}$;
\item[{\rm (3)}] $b^{\circ}=(cab)^{\circ}$ and $cR=cabR$.
\end{itemize}
\end{theorem}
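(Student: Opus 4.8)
The plan is to prove the cyclic chain $(1)\Rightarrow(2)\Rightarrow(3)\Rightarrow(1)$, drawing each implication from the three lemmas just established together with the characterization of the $(b,c)$-inverse in Lemma~\ref{abcire}. The organizing idea is that conditions (2) and (3) decouple the ``row'' data (governed by $bR\subseteq xR$, equivalently by $Rb=Rcab$ and the annihilator equality $b^{\circ}=(cab)^{\circ}$) from the ``column'' data (governed by $x^{\circ}\subseteq c^{\circ}$, equivalently by $cR=cabR$), so that Lemma~\ref{ats2innera} handles the $b$-side and Lemma~\ref{ats2iannnera} (together with Lemma~\ref{ats2innerabdd}) handles the $c$-side.

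For $(1)\Rightarrow(2)$ I would simply invoke Lemma~\ref{abcire}: if $x$ is the $(b,c)$-inverse of $a$, then $xax=x$, $xR=bR$ and $Rx=Rc$. The first two give $xax=x$ and $bR\subseteq xR$ at once, while $Rx=Rc$ yields $x^{\circ}\subseteq c^{\circ}$ since $c\in Rx$ forces $xv=0\Rightarrow cv=0$. For $(2)\Rightarrow(3)$ I would feed the hypotheses into the two annihilator lemmas: $xax=x$ with $bR\subseteq xR$ is exactly condition (2) of Lemma~\ref{ats2innera}, so its equivalent condition (4) delivers $b^{\circ}=(cab)^{\circ}$; and $xax=x$ with $x^{\circ}\subseteq c^{\circ}$ is condition (2) of Lemma~\ref{ats2iannnera}, whose condition (3) delivers $cR=cabR$. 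Together these are precisely (3).

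The substantive direction is $(3)\Rightarrow(1)$, and its one genuine subtlety is that condition (3) records only the annihilator equality $b^{\circ}=(cab)^{\circ}$, whereas Lemma~\ref{ats2innera} is phrased with the \emph{a priori} stronger hypothesis ``$b\in R^{-}$ and $b^{\circ}=(cab)^{\circ}$''. I would resolve this by upgrading $b^{\circ}=(cab)^{\circ}$ to the ideal equality $Rb=Rcab$: the inclusion $Rcab\subseteq Rb$ is automatic since $cab=(ca)b$, and conversely, because $cab$ is regular, the converse half of Lemma~\ref{annihilator}(2) turns $(cab)^{\circ}\subseteq b^{\circ}$ into $Rb\subseteq Rcab$; hence $Rb=Rcab$, which is condition (3) of Lemma~\ref{ats2innera} and in particular forces $b$ to be regular. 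Applying Lemma~\ref{ats2innera}$(3)\Rightarrow(1)$ then gives $xax=x$ and $xR=bR$, while applying Lemma~\ref{ats2innerabdd}$(3)\Rightarrow(1)$ to $cR=cabR$ gives $Rx=Rc$. With $xax=x$, $xR=bR$ and $Rx=Rc$ in hand, Lemma~\ref{abcire} concludes that $x$ is the $(b,c)$-inverse of $a$. I expect this annihilator-to-ideal upgrade, and in particular remembering that the regularity of $cab$ is exactly what licenses the converse in Lemma~\ref{annihilator}, to be the only place where care is needed; everything else is bookkeeping across the already-proved equivalences.
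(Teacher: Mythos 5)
Your proposal is correct and follows essentially the same route as the paper, which likewise obtains the theorem by assembling Lemma~\ref{ats2innera} (the $b$-side) and Lemma~\ref{ats2iannnera} (the $c$-side) with a characterization of the $(b,c)$-inverse; the only cosmetic difference is that you route through Lemma~\ref{abcire} and Lemma~\ref{ats2innerabdd} where the paper cites the hybrid-inverse characterization Lemma~\ref{abcirn}. Your explicit upgrade of $b^{\circ}=(cab)^{\circ}$ to $Rb=Rcab$ via the regularity of $cab$ and Lemma~\ref{annihilator} correctly fills in a step the paper leaves implicit.
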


By the fact that if $a\in R$ is invertible along $d$ if and only if $a$ is $(d,d)$-invertible, we have the following corollary.
\begin{corollary} \label{informuast2aal}
Let $a,d\in R$ such that $dad$ is regular. Let $x=d(dad)^-d$ for arbitrary $(dad)^-\in (dad)\{1\}$.
Then the following are equivalent:
\begin{itemize}
\item[{\rm (1)}] $x$ is the inverse along $d$ of $a$;
\item[{\rm (2)}] $xax=x$, $dR\subseteq xR$ and $x^{\circ}\subseteq d^{\circ}$;
\item[{\rm (3)}] $d^{\circ}=(dad)^{\circ}$ and $dR=dadR$.
\end{itemize}
\end{corollary}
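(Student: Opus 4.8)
The plan is to obtain this corollary as the special case $b=c=d$ of Theorem~\ref{informuast2a}, so the whole argument reduces to checking that each hypothesis and each of the three conditions specializes correctly, and that the notion of $(b,c)$-inverse collapses to the inverse along $d$. First I would recall, as noted in the introduction, that $\along{a}$ is by definition the $(d,d)$-inverse of $a$; equivalently, $a$ is invertible along $d$ if and only if $a$ is $(d,d)$-invertible, and in that case $\along{a}=\bc{a}$ with $b=c=d$. This identifies condition (1) of the corollary with condition (1) of Theorem~\ref{informuast2a} under the substitution $b=c=d$.

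Next I would verify that the standing hypotheses match: setting $b=c=d$ turns $cab$ into $dad$, so the regularity assumption that $cab$ is regular becomes the assumption that $dad$ is regular, and the candidate element $x=b(cab)^-c$ becomes $x=d(dad)^-d$, exactly as in the corollary. The remaining checks are purely notational. In condition (2), $bR\subseteq xR$ becomes $dR\subseteq xR$ and $x^{\circ}\subseteq c^{\circ}$ becomes $x^{\circ}\subseteq d^{\circ}$; in condition (3), $b^{\circ}=(cab)^{\circ}$ becomes $d^{\circ}=(dad)^{\circ}$ and $cR=cabR$ becomes $dR=dadR$. Since each of the three conditions of the corollary is precisely the image under $b=c=d$ of the corresponding condition of Theorem~\ref{informuast2a}, their equivalence follows immediately from that theorem.

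Given the directness of the specialization, I do not expect any genuine obstacle. The only point meriting care is the initial identification of the inverse along $d$ with the $(d,d)$-inverse, but this equivalence is already recorded in the introduction, so the proof is essentially a single invocation of Theorem~\ref{informuast2a}.
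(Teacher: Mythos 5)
Your proposal is correct and matches the paper's own derivation exactly: the corollary is obtained by setting $b=c=d$ in Theorem~\ref{informuast2a} and invoking the identification of the inverse along $d$ with the $(d,d)$-inverse, which is precisely the one-line justification the paper gives.
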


In \cite[Definition 1.2]{Df} and \cite[Definition 2.1]{KVC}, the authors introduced the one-sided $(b,c)$-inverse in rings.
Let $a,b,c\in R$. We call that $x\in R$ is a {\em left $(b,c)$-inverse} of $a$ if we have
\begin{equation} \label{leftbcda}
Rx\subseteq Rc~\text{and}~xab=b.
\end{equation}
We call that $y\in R$ is a {\em right $(b,c)$-inverse} of $a$ if we have
\begin{equation} \label{leftbcda}
yR\subseteq bR~\text{and}~cay=c.
\end{equation}
\begin{lemma} \emph{\cite[Proposition 2.8]{KVC}} \label{informuast2aarfsda}
Let $a,b,c\in R$. Then $y$ is a left $(b,c)$-inverse of $a$ if and only if $y^\ast$ is a right
$(c^\ast,b^\ast)$-inverse of $a^\ast$. 
\end{lemma}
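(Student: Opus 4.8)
The plan is to show that the involution $\ast$, being an involutive anti-automorphism of $R$, carries each defining condition of a left $(b,c)$-inverse of $a$ to the corresponding defining condition of a right $(c^{\ast},b^{\ast})$-inverse of $a^{\ast}$, and conversely. By definition, $y$ is a left $(b,c)$-inverse of $a$ precisely when $Ry\subseteq Rc$ and $yab=b$, whereas $y^{\ast}$ is a right $(c^{\ast},b^{\ast})$-inverse of $a^{\ast}$ precisely when $y^{\ast}R\subseteq c^{\ast}R$ and $b^{\ast}a^{\ast}y^{\ast}=b^{\ast}$ (reading off the right-inverse conditions with $a$, $b$, $c$ replaced by $a^{\ast}$, $c^{\ast}$, $b^{\ast}$ and the inverse element taken to be $y^{\ast}$). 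Thus it suffices to establish the two equivalences $yab=b\iff b^{\ast}a^{\ast}y^{\ast}=b^{\ast}$ and $Ry\subseteq Rc\iff y^{\ast}R\subseteq c^{\ast}R$.

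For the equational condition, I would apply $\ast$ to both sides of $yab=b$ and use $(yab)^{\ast}=b^{\ast}a^{\ast}y^{\ast}$ together with $(z^{\ast})^{\ast}=z$: applying $\ast$ once gives $b^{\ast}a^{\ast}y^{\ast}=b^{\ast}$, and applying it a second time recovers $yab=b$, so the two equalities are equivalent. For the inclusion of one-sided ideals, the key observation is that for any $s\in R$ one has $(Rs)^{\ast}=s^{\ast}R^{\ast}=s^{\ast}R$, where $R^{\ast}=R$ since $\ast$ is surjective (every $r$ equals $(r^{\ast})^{\ast}$). As $\ast$ is a bijection on $R$, it both preserves and reflects inclusions of subsets; applying it to $Ry\subseteq Rc$ therefore yields $y^{\ast}R\subseteq c^{\ast}R$, and the reverse implication follows by applying $\ast$ once more.

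There is no genuine obstacle here: the whole argument is bookkeeping with the anti-automorphism $\ast$. The only points requiring a moment's care are verifying that $R^{\ast}=R$, so that $(Rs)^{\ast}$ collapses to $s^{\ast}R$, and observing that inclusion of subsets is both preserved and reflected by the bijection $\ast$, which is exactly what makes each of the two equivalences bidirectional. Combining them gives the stated result.
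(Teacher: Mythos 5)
Your proof is correct: the involution is an involutive anti-automorphism, so it converts $yab=b$ into $b^{\ast}a^{\ast}y^{\ast}=b^{\ast}$ and $Ry\subseteq Rc$ into $y^{\ast}R\subseteq c^{\ast}R$, which are exactly the defining conditions for $y^{\ast}$ to be a right $(c^{\ast},b^{\ast})$-inverse of $a^{\ast}$, and bijectivity of $\ast$ makes both translations reversible. The paper does not prove this lemma itself (it is quoted from \cite[Proposition 2.8]{KVC}), and your bookkeeping argument is the standard one.
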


\begin{theorem}
Let $a,b,c\in R$ such that $cab$ is regular. Then
\begin{itemize}
\item[{\rm (1)}]
if $a$ is both left $(b,c)$-invertible, then
a general solution of the left $(b,c)$-inverse of $a$ is
\begin{equation*}
b(cab)^-c+v[1-cab(cab)^-]c,
\end{equation*}
where $v\in R$ is arbitrary;
\item[{\rm (2)}]
if $a$ is both left and right $(b,c)$-invertible, then
a general solution of the right $(b,c)$-inverse of $a$ is
\begin{equation*}
b(cab)^-c+b[1-(cab)^-cab]u,
\end{equation*}
where $u\in R$ is arbitrary.
\end{itemize}
\end{theorem}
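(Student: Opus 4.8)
The plan is to read both defining conditions of a left (resp. right) $(b,c)$-inverse as an inhomogeneous one-sided linear equation and to describe its full solution set as one particular solution plus the general solution of the associated homogeneous equation, in the same spirit as parametrizing the general inner inverse of a regular element. Throughout I would exploit the unital reformulations $Rx\subseteq Rc\iff x\in Rc$ and $yR\subseteq bR\iff y\in bR$, together with the single algebraic engine provided by regularity, namely $cab(cab)^-cab=cab$.

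For part (1) I would proceed in three steps. First, record that $a$ is left $(b,c)$-invertible exactly when $b\in Rcab$, and that (since $cab$ is regular) this is equivalent to the identity $b=b(cab)^-cab$; this is precisely what makes the particular term $b(cab)^-c$ a genuine left $(b,c)$-inverse. Second, the easy (verification) direction: for every $v\in R$ the element $x=b(cab)^-c+v[1-cab(cab)^-]c$ obviously lies in $Rc$, and $xab=b(cab)^-cab+v[cab-cab(cab)^-cab]=b+0=b$, so each such $x$ is a left $(b,c)$-inverse. Third, the substantive (completeness) direction: given any left $(b,c)$-inverse $x$, i.e. $x\in Rc$ and $xab=b$, I would write the difference $x-b(cab)^-c=zc$ for some $z\in R$ (possible as both elements lie in $Rc$), observe that $zcab=xab-b(cab)^-cab=b-b=0$, and conclude $z[1-cab(cab)^-]c=zc-(zcab)(cab)^-c=zc$. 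Taking $v=z$ then exhibits $x$ in the claimed form, so the parametrization captures every left $(b,c)$-inverse.

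Part (2) is the mirror image. I would either repeat the argument on the right, using that right $(b,c)$-invertibility means $c\in cabR$, hence $c=cab(cab)^-c$; the same verification gives $cay=c$ for every $u$, and for completeness one writes $y-b(cab)^-c=bz$ (both in $bR$), derives $cabz=0$ from $cay=c$, and absorbs the right projector via $b[1-(cab)^-cab]z=bz-b(cab)^-(cabz)=bz$. Alternatively, to avoid duplicating work, I would invoke the involutive duality of Lemma~\ref{informuast2aarfsda}: apply part (1) to the triple $(a^\ast,c^\ast,b^\ast)$ with the inner inverse $[(cab)^-]^\ast$ of $(cab)^\ast=b^\ast a^\ast c^\ast$, then take adjoints; the particular term $c^\ast[(cab)^-]^\ast b^\ast$ transposes to $b(cab)^-c$ and the homogeneous term to $b[1-(cab)^-cab]u$ with $u=v^\ast$, recovering exactly the stated formula.

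I do not expect a deep obstacle here; the entire content is the completeness direction, and the one point to get right is that the free difference $z$ coming from membership in $Rc$ (resp. $bR$) may be fed directly into the projector because the homogeneous constraint $zcab=0$ (resp. $cabz=0$) forces the projector to act as the identity on it. The only other care needed is to match the hypothesis to the existence condition: part (1) truly requires $b\in Rcab$ and part (2) requires $c\in cabR$, so that $b(cab)^-c$ is itself a one-sided inverse serving as the base point of the affine solution set.
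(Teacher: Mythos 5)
Your proposal is correct and follows essentially the same route as the paper: both take $b(cab)^-c$ as the particular solution and parametrize the remaining freedom by the general solution of the one-sided linear equation $scab=b$ (resp.\ its mirror), with part (2) deducible from part (1) via the involutive duality of Lemma~\ref{informuast2aarfsda}. The only difference is that you prove the completeness direction explicitly (writing $x-b(cab)^-c=zc$, deriving $zcab=0$, and absorbing the projector), whereas the paper invokes the standard description of the general solution of $scab=b$ and only verifies that each candidate is indeed a left $(b,c)$-inverse.
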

\begin{proof}
$(1)$. Let $x$ be a left $(b,c)$-inverse of $a$. Then we have $xab=b$ and $Rx\subseteq Rc$.
Thus $x=sc$ for some $s\in R$ and $b=xab=scab$. 
A general solution of $b=scab$ is 
\begin{equation*} 
b(cab)^-+v[1-cab(cab)^-],
\end{equation*}
where $v\in R$ is arbitrary.
Let $y=b(cab)^-c+v[1-cab(cab)^-]c$. Next we will check $y$ is a left $(b,c)$-inverse of $a$.
\begin{equation} \label{leftbsgbda}
yab=b(cab)^-cab+v[1-cab(cab)^-]cab=b(cab)^-cab.
\end{equation}
By Lemma~\ref{ats2innera} and \cite[Theorem 2.6]{KVC}, we have $Rb=Rcab$.
Since $cab$ is regular, then the condition $Rb=Rcab$ implies
$b=rcab=rcab(cab)^-cab=b(cab)^-cab$, thus $b=yab$ by $(\ref{leftbsgbda})$.
Thus $y$ is a left $(b,c)$-inverse of $a$ by $Ry\subseteq Rc$ is trivial.

$(2)$ follows from $(1)$ and Lemma~\ref{informuast2aarfsda}.
\end{proof}

\begin{theorem}
Let $a,b,c\in R$. If $a$ is both left and right $(b,c)$-invertible, then the left inverse of $a$ and the right inverse of $a$ are unique.
Moreover, the left $(b,c)$-inverse of $a$ coincides with the right $(b,c)$-inverse.
\end{theorem}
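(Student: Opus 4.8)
The plan is to reduce all three assertions to one crossing identity: any left $(b,c)$-inverse of $a$ equals any right $(b,c)$-inverse of $a$. By hypothesis at least one of each exists, so let $x$ be a left $(b,c)$-inverse and $y$ a right $(b,c)$-inverse; thus $Rx\subseteq Rc$ with $xab=b$, and $yR\subseteq bR$ with $cay=c$.

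First I would turn the two one-sided containments into factorizations. Since $x=1\cdot x\in Rx\subseteq Rc$, there is $s\in R$ with $x=sc$; similarly $y=y\cdot 1\in yR\subseteq bR$ gives $y=bt$ for some $t\in R$. The key step is then to evaluate the sandwich $xay$ in two ways. On one hand, using $y=bt$ together with $xab=b$, one gets $xay=xa(bt)=(xab)t=bt=y$. On the other hand, using $x=sc$ together with $cay=c$, one gets $xay=(sc)ay=s(cay)=sc=x$. Comparing the two expressions yields $x=xay=y$.

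With this identity in hand the remaining statements are immediate. For uniqueness of the left inverse, if $x_1$ and $x_2$ are both left $(b,c)$-inverses and $y$ is any (existing, by hypothesis) right $(b,c)$-inverse, then $x_1=y=x_2$; symmetrically, any two right $(b,c)$-inverses coincide because each equals a fixed left inverse. Finally, the asserted coincidence of the left and right $(b,c)$-inverses is exactly the equality $x=y$ already proved.

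The argument requires no earlier lemma, and the only thing to notice is structural rather than computational: the two defining relations $xab=b$ and $cay=c$ can be coupled only through the element $xay$, and the ideal containments $Rx\subseteq Rc$ and $yR\subseteq bR$ are precisely what permit the factorizations $x=sc$ and $y=bt$ that collapse $xay$ to $x$ on one side and to $y$ on the other. There is thus no genuine obstacle beyond spotting this sandwich, which mirrors the classical fact that a two-sided inverse, when it exists on both sides, is forced to be unique and two-sided.
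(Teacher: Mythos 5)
Your proof is correct and follows essentially the same route as the paper: both factor $x=sc$ and $y=bt$ from the ideal containments and then evaluate the sandwich $xay$ in two ways to get $x=xay=y$, after which uniqueness on each side is immediate. No issues.
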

\begin{proof}
Let $x$ be a left $(b,c)$-inverse of $a$ and $y_1$ be a right $(b,c)$-inverse of $a$. Then we have
$Rx\subseteq Rc$, $xab=b$, $y_1R\subseteq bR$ and $cay_1=c$. Thus
$x=rc$ and $y_1=bs$ for some $r,s\in R$. Therefore,
\begin{equation*}
\begin{split}
x&=rc=rcay_1=xay_1;\\
y_1&=bs=xabs=xay_1.
\end{split}
\end{equation*}
That is $x=y_1$. If $y_2$ is a another right $(b,c)$-inverse of $a$, in a similar manner, we have $x=y_2$.
Then $y_1=y_2$ by $x=y_1$ and $x=y_2$, that is the right $(b,c)$-inverse of $a$ is unique.
In a similar way, the left $(b,c)$-inverse is unique, and by the previous reasoning, these two inverses are equal.
\end{proof}


\begin{theorem}
Let $a,b,c\in R$. Then the following are equivalent:
\begin{itemize}
\item[{\rm (1)}] $a$ is $(b,c)$-invertible;
\item[{\rm (2)}] $b,c\in R^-$, ${}^{\circ}c={}^{\circ}(cab)$, $R=Rc\oplus {}^{\circ}(ab)$ and $Rb=Rab$;
\item[{\rm (3)}] $b,c\in R^-$, $b^{\circ}=(cab)^{\circ}$, $R=bR\oplus (ca)^{\circ}$ and $cR=caR$;
\item[{\rm (4)}] $cab\in R^-$, $Rc\subseteq Rb(cab)^-c$, $R=Rc\oplus {}^{\circ}(ab)$ and $Rb=Rab$;
\item[{\rm (5)}] $cab\in R^-$, $bR\subseteq b(cab)^-cR$, $R=bR\oplus (ca)^{\circ}$ and $cR=caR$.
\end{itemize}
\end{theorem}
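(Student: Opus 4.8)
The plan is to treat statement (1) as the hub: I will prove $(1)\Leftrightarrow(2)$ and $(1)\Leftrightarrow(4)$ in full, and then obtain $(1)\Leftrightarrow(3)$ and $(1)\Leftrightarrow(5)$ by the left--right (opposite ring) duality, under which $a$ is $(b,c)$-invertible in $R$ if and only if $a$ is $(c,b)$-invertible in $R^{\mathrm{op}}$. This duality carries the left ideals $Rc,Rb$ and the left annihilators ${}^{\circ}(ab),{}^{\circ}(cab)$ appearing in (2) and (4) to the right ideals $cR,bR$ and the right annihilators $(ca)^{\circ},(cab)^{\circ}$ appearing in (3) and (5), so that (2) and (4) go over verbatim to (3) and (5). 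Throughout I will lean on Lemma~\ref{abcirg} (existence $\Leftrightarrow b\in Rcab$ and $c\in cabR$), Lemma~\ref{abcirf} (regularity of $b,c,cab$), and Lemma~\ref{abcire} (the $xax=x$, $xR=bR$, $Rx=Rc$ description of $\bc{a}$).

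For $(1)\Rightarrow(2)$, Lemma~\ref{abcirf} gives $b,c\in R^-$, and Lemma~\ref{abcirg} gives $b\in Rcab$ and $c\in cabR$; since $Rcab\subseteq Rab\subseteq Rb$ and $cabR\subseteq cR$ always hold, these yield $Rb=Rab$ and $cR=cabR$, whence ${}^{\circ}c={}^{\circ}(cab)$ by Lemma~\ref{annihilator}(1). The only substantive point is the decomposition $R=Rc\oplus{}^{\circ}(ab)$. Writing $y=\bc{a}$, I will use the idempotent $e=ay$ (it is idempotent since $yay=y$) and the always-valid splitting $R=Re\oplus R(1-e)$. Here $Rc=Ry=Ray=Re$ by Lemma~\ref{pirdcca}(2), so it remains to identify $R(1-e)={}^{\circ}(ab)$: the inclusion $\subseteq$ comes from $(ay)(ab)=a(yab)=ab$ (using $yab=b$), and the inclusion $\supseteq$ comes from $y\in bR$ (as $yR=bR$), which forces $w(ab)=0\Rightarrow w(ay)=0$, i.e. $w\in R(1-e)$.

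For $(2)\Rightarrow(1)$, the guiding idea is that the geometric hypotheses by themselves force the two membership conditions of Lemma~\ref{abcirg}. From $R=Rc\oplus{}^{\circ}(ab)$ write $1=p+q$ with $p\in Rc$ and $q\in{}^{\circ}(ab)$; then $ab=pab\in Rcab$, so $Rab=Rcab$, and together with $Rb=Rab$ this gives $Rb=Rcab$, i.e. $b\in Rcab$. Since $b\in R^-$, Lemma~\ref{pirregulara}(2) now makes $cab$ regular; combining ${}^{\circ}(cab)\subseteq{}^{\circ}c$ with this regularity and the converse part of Lemma~\ref{annihilator}(1) gives $cR\subseteq cabR$, hence $cR=cabR$ and $c\in cabR$. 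Lemma~\ref{abcirg} then produces the $(b,c)$-inverse.

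Finally, for $(1)\Leftrightarrow(4)$ I set $x=b(cab)^-c$ and exploit that (2) and (4) share the hypotheses $R=Rc\oplus{}^{\circ}(ab)$ and $Rb=Rab$, which as above give $Rb=Rcab$; with $cab\in R^-$ this makes $b$ regular by Lemma~\ref{pirregulara}(2) and, by Lemma~\ref{ats2innera}, forces $xax=x$ and $xR=bR$. Adding the hypothesis $Rc\subseteq Rx$ and invoking Lemma~\ref{ats2innerabdd}(2) gives $Rx=Rc$, so $x=\bc{a}$ by Lemma~\ref{abcire}; conversely, when $a$ is $(b,c)$-invertible we already have $cR=cabR$, and Lemma~\ref{ats2innerabdd} then yields $Rc\subseteq Rx=Rb(cab)^-c$ for every inner inverse. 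The main obstacle is the decomposition $R=Rc\oplus{}^{\circ}(ab)$: selecting the correct idempotent ($ay$ rather than $ya$) and verifying the annihilator inclusion through $y\in bR$ is the one step that is not a formal manipulation, the remainder being bookkeeping with the cited lemmas and the opposite-ring symmetry.
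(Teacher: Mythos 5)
Your proof is correct and follows essentially the same route as the paper: both treat (1) as the hub, derive $Rb=Rcab$ and $cR=cabR$ from the existence criteria, obtain the direct sum from the splitting $1=ay+(1-ay)$ with $ay\in Ry=Rc$ and $(1-ay)ab=0$, and dispose of (3) and (5) by left--right symmetry. The only cosmetic differences are that you certify directness via the idempotent decomposition $R=Re\oplus R(1-e)$ with $e=ay$ (the paper instead computes $Rc\cap{}^{\circ}(ab)=\{0\}$ directly from ${}^{\circ}c={}^{\circ}(cab)$) and you close $(2)\Rightarrow(1)$ with the membership criterion of Lemma~\ref{abcirg} rather than the explicit formula $b(cab)^-c$ of Theorem~\ref{informuast2a}.
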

\begin{proof}
$(1)\Rightarrow (2)$. Suppose that $y$ is the $(b,c)$-inverse of $a$. Then the condition $yab=b$ implies that
that $Rb=Rab$. By Lemma~\ref{annihilator}, Lemma~\ref{abcirg} and Lemma~\ref{abcirf}, we have $b,c,cab \in R^-$
and ${}^{\circ}c={}^{\circ}(cab)$.
Since $1=ay+(1-ay)$, $ay\in Ry=Rc$ by Lemma~\ref{abcire} and $(1-ay)ab=ab-ayab=ab-ab=0$, thus $R= Rc+{}^{\circ}(ab)$.
Let $w\in Rc\cap {}^{\circ}(ab)$. Then $w=tc$ and $wab=0$ for some $t\in R$.
Thus $tcab=wab=0$, that is $t\in {}^{\circ}(cab)$.
By ${}^{\circ}c={}^{\circ}(cab)$, we have $tc=0$, i.e. $w=0$.
Therefore, $R=Rc\oplus {}^{\circ}(ab)$.

$(2)\Rightarrow (1)$. The condition $R=Rc\oplus {}^{\circ}(ab)$ implies that $1=uc+v$, where $u\in R$
and $v\in {}^{\circ}(ab)$. Then $ab=ucab+vab=ucab\in Rcab$, then $Rb=Rcab$ by $Rb=Rab$.
Since $b\in R^-$ and $Rb=Rcab$, then $cab$ is regular by Lemma~\ref{pirregulara}.
Let $x=b(cab)^-c$.  By Theorem~\ref{informuast2a}, we have $x$ is the $(b,c)$-inverse of $a$.

$(2)\Rightarrow (4)$. Since $(1)\Leftrightarrow(2)$, it is easy to check $(4)$ by the proof of $(2)\Rightarrow (1)$.

$(4)\Rightarrow (2)$. By the proof of $(2)\Rightarrow (1)$, we have $Rb=Rcab$, then $b$ is regular by $cab\in R^-$
and Lemma~\ref{pirregulara}. Let $x=b(cab)^-c$. Since $Rb=Rcab$, we have $xax=x$ by Lemma~\ref{ats2innera}.
The condition $Rc\subseteq Rb(cab)^-c$ implies that $Rc=Rx$, thus $c$ is regular by $xax=x$.
The proof is finished by Theorem~\ref{informuast2a}.

The proofs of $(1)\Leftrightarrow(3)$ and $(3)\Leftrightarrow(5)$ are similar to the proofs of
$(1)\Leftrightarrow(2)$ and $(2)\Leftrightarrow(4)$, respectively.
\end{proof}


In \cite[Theorem 2.11]{KCC}, the authors gave a generalization of \cite[Theorem 2.1]{Wh}.
In the following theorem, we will present a generalization of \cite[Theorem 2.11]{KCC}, which reduces to the $(b,c)$-inverse of $a$.
As applications of the Lemma~\ref{bcuva}, we have that \cite[Theorem 2.14]{KCC} and \cite[Theorem 2.13]{KCC}
can be generalized.

\begin{lemma} \emph{\cite[Remark 2.2 (i)]{BK}} \label{bcuva}
Let $a,d,u,v\in R$. If $bR=uR$ and $Rc=Rv$, then $a$ is $(b,c)$-invertible if and only if $a$ is $(u,v)$-invertible.
In this case, we have $\bc{a}=\uv{a}$.
\end{lemma}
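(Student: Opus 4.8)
The plan is to reduce everything to the Green's-relation characterization of the $(b,c)$-inverse recorded in Lemma~\ref{abcire}, which asserts that $y$ is the $(b,c)$-inverse of $a$ precisely when $yay=y$, $yR=bR$ and $Ry=Rc$. The decisive observation is that these three defining conditions involve $b$ and $c$ only through the one-sided principal ideals $bR$ and $Rc$; the elements $b$ and $c$ never occur outside of these ideals. Consequently, the very notion of being a $(b,c)$-inverse depends only on the pair of ideals $(bR,Rc)$, and the hypotheses $bR=uR$ and $Rc=Rv$ say exactly that this pair coincides for $(b,c)$ and for $(u,v)$.

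First I would assume that $a$ is $(b,c)$-invertible and put $y=\bc{a}$. By Lemma~\ref{abcire} this yields $yay=y$, $yR=bR$ and $Ry=Rc$. Substituting the hypotheses $bR=uR$ and $Rc=Rv$ transforms these into $yay=y$, $yR=uR$ and $Ry=Rv$, which is verbatim the criterion of Lemma~\ref{abcire} for $y$ to be the $(u,v)$-inverse of $a$. Hence $a$ is $(u,v)$-invertible and, since the same element $y$ satisfies both characterizations, $\uv{a}=y=\bc{a}$. The converse direction is entirely symmetric: the hypotheses $bR=uR$ and $Rc=Rv$ are symmetric in the two pairs, so interchanging the roles of $(b,c)$ and $(u,v)$ and rerunning the argument shows that $(u,v)$-invertibility implies $(b,c)$-invertibility with the identical inverse. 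The uniqueness of the $(b,c)$-inverse, noted after its definition, guarantees that the resulting equality $\bc{a}=\uv{a}$ is unambiguous.

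I do not expect a genuine obstacle in this argument; the entire content lies in recognizing that Lemma~\ref{abcire} repackages the $(b,c)$-inverse as data attached only to the ideals $bR$ and $Rc$, after which the proof is a single substitution of equal sets. The one point worth stating explicitly is that the replacement of $bR$ by $uR$ and of $Rc$ by $Rv$ is legitimate because these are literal equalities of sets, so no regularity or extra structural hypothesis is needed. Thus the result follows from Lemma~\ref{abcire} alone.
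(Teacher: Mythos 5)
Your argument is correct and coincides with the paper's own proof: both apply Lemma~\ref{abcire} to translate $(b,c)$-invertibility into the conditions $yay=y$, $yR=bR$, $Ry=Rc$, substitute the ideal equalities $bR=uR$ and $Rc=Rv$, and invoke symmetry for the converse. No gaps.
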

\begin{proof}
Let $bR=uR$ and $Rc=Rv$. Suppose that $y$ is $(b,c)$-inverse of $a$, then $yay=y$, $yR=bR$ and $Ry=Rc$ by Lemma~\ref{abcire}.
By $bR=uR$ and $Rc=Rv$, we have $yay=y$, $yR=uR$ and $Ry=Rv$, that is $y$ is $(u,v)$-inverse of $a$.
The opposite implication can be proved in a similar manner.
\end{proof}

\begin{corollary} \label{bcbcwa}
Let $a,d,c\in R$. If $a$ is $(b,c)$-invertible, $bR=ebR$ and $Rc=Rcf$, then $a$ is $(eb,cf)$-invertible.
In this case, we have $\bc{a}=\bcw{a}$.
\end{corollary}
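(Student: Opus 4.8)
The plan is to obtain this corollary as an immediate specialization of Lemma~\ref{bcuva}. That lemma asserts that whenever $bR = uR$ and $Rc = Rv$, the element $a$ is $(b,c)$-invertible if and only if it is $(u,v)$-invertible, and in that case $\bc{a} = \uv{a}$. So the first step is simply to make the choice $u = eb$ and $v = cf$ and to read Lemma~\ref{bcuva} with this substitution.

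Next I would verify that the two hypotheses of Lemma~\ref{bcuva} hold for this choice. The assumption $bR = ebR$ is precisely the right-ideal condition $bR = uR$ with $u = eb$, and the assumption $Rc = Rcf$ is precisely the left-ideal condition $Rc = Rv$ with $v = cf$. Thus both equalities required by the lemma are available verbatim, with no computation needed beyond matching notation.

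Finally, since $a$ is assumed to be $(b,c)$-invertible, Lemma~\ref{bcuva} yields at once that $a$ is $(u,v)$-invertible, that is, $(eb,cf)$-invertible, together with the identity $\bc{a} = \uv{a}$. Rewriting $\uv{a}$ as $\bcw{a}$ gives the claimed equality $\bc{a} = \bcw{a}$, completing the argument.

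I do not anticipate any genuine obstacle here: the corollary is nothing more than Lemma~\ref{bcuva} read with $u$ and $v$ replaced by $eb$ and $cf$. The only point deserving a moment's care is keeping the handedness straight—$bR = ebR$ feeds the right-ideal hypothesis while $Rc = Rcf$ feeds the left-ideal hypothesis—but once the substitution $u = eb$, $v = cf$ is fixed, the verification is purely notational.
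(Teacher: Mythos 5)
Your proof is correct and is exactly the intended derivation: the paper states this as a corollary of Lemma~\ref{bcuva} with no separate argument, and your substitution $u=eb$, $v=cf$ is precisely that specialization. Nothing further is needed.
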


If we let $u=v=d$ in Lemma~\ref{bcuva}, then by Lemma~\ref{alongd} and Lemma~\ref{alongo} and Lemma~\ref{abcirl},
we have that \cite[Theorem 2.14]{KCC} is a corollary of Lemma~\ref{bcuva}.
It is well-known that if $a\in R$ is invertible along $d$ if and only if $a$ is $(d,d)$-invertible.
Thus we have that \cite[Theorem 2.13]{KCC} is a corollary of Lemma~\ref{bcuva}.

By Lemma~\ref{alongd} and \cite[Theorem 2.13]{KCC}, we have the following remark.
Since $a\in R$ is invertible along $d$ if and only if $a$ is $(d,d)$-invertible,
a natural question is when an element invertible along $d$ is $(b,c)$-invertible,
the following remark answers this question.
\begin{remark}
Let $a,b,c,d\in R$ with $b,c$ are regular, $dR=bR$ and $d^{\circ}=c^{\circ}$.
Then $a$ is $(b,c)$-invertible if and only if $a$ is invertible along $d$.
In this case, $(b,c)$-inverse of $a$ coincides with the inverse along $d$ of $a$.
\end{remark}

The following theorem is a generalization of some well-known results of the $\{1,4\}$-inverse and the group inverse.

\begin{theorem} \label{inofbca}
Let $a,b,c\in R$. Then the following are equivalent:
\begin{itemize}
\item[{\rm (1)}] there exists $y\in R$ such that $aya=a$, $yay=y$ and $yR=bR$;
\item[{\rm (2)}] $a$ is regular, $aR=abR$ and $(ab)^{\circ}=b^{\circ}$;
\item[{\rm (3)}] $a$ is regular and $R=a^{\circ}\oplus bR$.
\end{itemize}
\end{theorem}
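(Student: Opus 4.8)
The plan is to establish the two equivalences $(1)\Leftrightarrow(2)$ and $(2)\Leftrightarrow(3)$, using $(2)$ as the computational hub. The passages between $(2)$ and $(3)$ are pure annihilator/direct-sum bookkeeping, while the real content sits in the construction of the reflexive inner inverse needed to return to $(1)$.

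For $(1)\Rightarrow(2)$, regularity of $a$ is immediate from $aya=a$. For $aR=abR$, I would note that $aya=a$ gives $a=(ay)a\in ayR$, so $aR\subseteq ayR=a(yR)=a(bR)=abR$, the reverse inclusion being trivial. For $(ab)^{\circ}=b^{\circ}$ only the inclusion $\subseteq$ needs work: by Lemma~\ref{pirdcca}(1) one has $bR=yR=yaR$, hence $b=yas$ for some $s$, and the key simplification is $ab=a(yas)=(aya)s=as$. Then $abx=0$ forces $asx=0$, whence $bx=yasx=y(asx)=0$; the only delicate point here is keeping the associativity bookkeeping straight.

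The equivalence $(2)\Leftrightarrow(3)$ I would get by matching the summands against the two pieces of $(2)$. The intersection condition $a^{\circ}\cap bR=\{0\}$ is equivalent to $(ab)^{\circ}\subseteq b^{\circ}$: if $abx=0$ then $bx\in a^{\circ}\cap bR$, and conversely any $v=br\in a^{\circ}\cap bR$ forces $r\in(ab)^{\circ}=b^{\circ}$, so $v=0$. The spanning condition $R=a^{\circ}+bR$ is equivalent to $aR=abR$: from $1=u+br$ with $u\in a^{\circ}$ one reads off $a=abr\in abR$, and conversely $ar\in aR=abR$ gives $ar=abs$, so $r-bs\in a^{\circ}$ and $r=(r-bs)+bs$. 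Since regularity of $a$ is carried along as a hypothesis in both $(2)$ and $(3)$, these assemble into the stated equivalence.

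The crux is $(2)\Rightarrow(1)$, the explicit construction of $y$. Since $a$ is regular and $aR=abR$, Lemma~\ref{pirregulara}(1) shows $ab$ is regular; I would fix a \emph{reflexive} inner inverse $(ab)^{-}$ (replacing any $g\in(ab)\{1\}$ by $g(ab)g$) and set $y=b(ab)^{-}$. Then $aya=a$ follows from $a\in abR$ (write $a=abs$, so $ab(ab)^{-}a=ab(ab)^{-}abs=abs=a$), and $yay=y$ follows from the reflexivity of $(ab)^{-}$. I expect the genuine obstacle to be the range identity $yR=bR$: the inclusion $yR\subseteq bR$ is free, but $bR\subseteq yR$ is exactly where $(ab)^{\circ}=b^{\circ}$ becomes indispensable. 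From $ab(ab)^{-}ab=ab$ one gets $(ab)^{-}ab-1\in(ab)^{\circ}=b^{\circ}$, hence $b(ab)^{-}ab=b$, that is $b=y\,ab\in yR$, giving $bR\subseteq yR$. This produces the required $y$ and closes the argument; the two things to be careful about are reflexivizing $(ab)^{-}$ so that $yay=y$ holds, and invoking $(ab)^{\circ}=b^{\circ}$ precisely at the step forcing $b\in yR$.
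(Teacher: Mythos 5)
Your proof is correct, and the core construction in $(2)\Rightarrow(1)$ --- taking $y=b(ab)^-$ --- is exactly the paper's; the differences are in organization and in how the annihilator condition is exploited. The paper runs the cycle $(1)\Rightarrow(3)\Rightarrow(2)\Rightarrow(1)$, deducing $(1)\Rightarrow(3)$ from Lemma~\ref{pirdcca} together with the splitting $1=ya+(1-ya)$, whereas you prove $(1)\Rightarrow(2)$ directly via $b=yas$ and $ab=as$, which is more elementary and self-contained; your $(2)\Leftrightarrow(3)$ bookkeeping coincides with the paper's $(3)\Rightarrow(2)$ plus the standard converse. In $(2)\Rightarrow(1)$ the paper converts $(ab)^{\circ}=b^{\circ}$ into $Rb\subseteq Rab$ via Lemma~\ref{annihilator} (using regularity of $ab$) and then derives $b=b(ab)^-ab$; you get $b=b(ab)^-ab$ in one line from $(ab)^{-}ab-1\in(ab)^{\circ}=b^{\circ}$, which is cleaner. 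One small redundancy in your argument: there is no need to reflexivize $(ab)^-$ to secure $yay=y$, since once $b=b(ab)^-ab$ is established one has $yay=b(ab)^-ab(ab)^-=b(ab)^-=y$ for an arbitrary inner inverse, which is precisely how the paper closes that step.
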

\begin{proof}
$(1)\Rightarrow(3)$. If there exists $y\in R$ such that $aya=a$, $yay=y$ and $yR=bR$,
then $a^{\circ} \cap bR=\{0\}$ by Lemma~\ref{pirdcca} and $yR=bR$.
Since $1=ya+(1-ya)\in yR+a^{\circ}=bR+a^{\circ}$ by $yR=bR$ and $aya=a$,
thus $R=a^{\circ}\oplus bR$.

$(3)\Rightarrow(2)$.
Suppose that $a$ is regular and $R=a^{\circ}\oplus bR$. Then $1=br+s$ for some $r\in R$ and $s\in a^{\circ}$.
Thus $a=a(br+s)=abr\in R$ by $as=0$, which gives that $aR=abR$.
Let $t\in (ab)^{\circ}$. Then $abt=0$ implies that $bt\in a^{\circ}$.
Since $bt\in bR$, thus $bt\in bR\cap a^{\circ}=\{0\}$ by $R=a^{\circ}\oplus bR$,
that is $bt=0$, thus $t\in b^{\circ}$. Therefore $(ab)^{\circ}=b^{\circ}$.

$(2)\Rightarrow(1)$.
Suppose that $a$ is regular, $aR=abR$ and $(ab)^{\circ}=b^{\circ}$. Then by Lemma~\ref{pirregulara} and $aR=abR$,
we have that $ab$ is regular.
By Lemma~\ref{annihilator} and $ab$ is regular, we have that $(ab)^{\circ}=b^{\circ}$ implies $Rb\subseteq Rab$.
Let $y=b(ab)^-$. We will check that $aya=a$, $yay=y$ and $yR=bR$.
The conditions $aR=abR$ and $ab$ is regular give that $a=ab(ab)^-a$, that is $a=aya$.
The conditions $Rb=Rab$ and $ab$ is regular give that $b=b(ab)^-ab$, then $yR=bR$ by $y=b(ab)^-$.
By $b=b(ab)^-ab$, we have $yay=b(ab)^-ab(ab)^-=b(ab)^-=y$.
\end{proof}

Let $a,y\in R$. By the proof of \cite[Theorem 3.1]{XCZ}, we have
\begin{equation} \label{12drpa}
aya=a,~yay=y,~ay^2=y,~ya^2=a~\Leftrightarrow~ay^2=y,~ya^2=a.
\end{equation}
\begin{equation} \label{12drpb}
aya=a,~yay=y,~y^2a=y,~a^2y=a~\Leftrightarrow~y^2a=y,~a^2y=a.
\end{equation}

If we take $y=a^\ast$ in Theorem~\ref{inofbca}, then there exists $y\in R$ such that $aya=a$, $yay=y$ and $yR=a^\ast R$
if and only if $a\in R^{\{1,4\}}$ by Lemma~\ref{134invese}.
Note that the condition $aR=aa^\ast R$ or $R=a^{\circ}\oplus a^\ast R$ implies that $a$ is regular by Lemma~\ref{134invese}.

If we take $y=a$ in Theorem~\ref{inofbca},
It is easy to check that there exists $y\in R$ such that $aya=a$, $yay=y$ and $yR=aR$ is equivalent to
$aya=a$, $yay=y$, $ay^2=y$ and $ya^2=a$. Thus, there exists $y\in R$ such that $ay^2=y$ and $ya^2=a$
if and only if $a\in R^\#$ by Lemma~\ref{pirgroupa} and $(\ref{12drpa})$.

In a similar manner, we have the following theorem, which a corresponding theorem of the Theorem~\ref{inofbca}.
The following theorem is a generalization of some well-known results of the $\{1,3\}$-inverse and the group inverse.
\begin{theorem} \label{inofbcdua}
Let $a,b,c\in R$. Then the following are equivalent:
\begin{itemize}
\item[{\rm (1)}] there exists $y\in R$ such that $aya=a$, $yay=y$ and $Ry=Rc$;
\item[{\rm (2)}] $a$ is regular, $Rca=Ra$ and ${}^{\circ}(ca)={}^{\circ}c$;
\item[{\rm (3)}] $a$ is regular and $R={}^{\circ}a\oplus Rc$.
\end{itemize}
\end{theorem}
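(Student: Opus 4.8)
The plan is to mirror the proof of Theorem~\ref{inofbca} under the left-right dualization that interchanges $aR$ with $Ra$, right annihilators $(-)^{\circ}$ with left annihilators ${}^{\circ}(-)$, and the module $bR$ with $Rc$. Wherever the proof of Theorem~\ref{inofbca} uses a right-handed statement, I would invoke its left-handed counterpart: Lemma~\ref{pirdcca}(2) and (4) in place of (1) and (3), Lemma~\ref{annihilator}(1) in place of (2), and Lemma~\ref{pirregulara}(2) in place of (1). The three implications then run as follows.

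For $(1)\Rightarrow(3)$, given $y\in R$ with $aya=a$, $yay=y$ and $Ry=Rc$, I would first record that ${}^{\circ}a\cap Ry=\{0\}$ by Lemma~\ref{pirdcca}(4), hence ${}^{\circ}a\cap Rc=\{0\}$. For the sum, I would write $1=ay+(1-ay)$: the summand $ay$ lies in $Ry=Rc$, while $(1-ay)a=a-aya=0$ places $1-ay$ in ${}^{\circ}a$, so $R={}^{\circ}a+Rc$, and the trivial-intersection property upgrades this to the direct sum $R={}^{\circ}a\oplus Rc$. For $(3)\Rightarrow(2)$, from $R={}^{\circ}a\oplus Rc$ I would decompose $1=s+rc$ with $sa=0$ and $r\in R$; then $a=(s+rc)a=rca\in Rca$, which forces $Ra=Rca$. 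For the annihilator equality, I would take $t\in{}^{\circ}(ca)$, so that $(tc)a=0$ gives $tc\in{}^{\circ}a\cap Rc=\{0\}$; hence $tc=0$, i.e. $t\in{}^{\circ}c$, and the reverse inclusion ${}^{\circ}c\subseteq{}^{\circ}(ca)$ is immediate, so ${}^{\circ}(ca)={}^{\circ}c$.

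For $(2)\Rightarrow(1)$, regularity of $a$ together with $Ra=Rca$ yields regularity of $ca$ by Lemma~\ref{pirregulara}(2). With $ca$ regular, Lemma~\ref{annihilator}(1) converts ${}^{\circ}(ca)={}^{\circ}c$ into $cR\subseteq caR$, whence $cR=caR$. I would then set $y=(ca)^{-}c$ and verify the three required identities: from $a\in Rca$ and regularity of $ca$ one gets $a=a(ca)^{-}ca$, so $aya=a$; from $c\in caR$ and regularity of $ca$ one gets $c=ca(ca)^{-}c$, i.e. $cay=c$, which gives $Rc\subseteq Ry$ and, together with $y\in Rc$, the equality $Ry=Rc$; finally $yay=(ca)^{-}cay=(ca)^{-}c=y$.

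The computations here are entirely routine; the only point requiring genuine care is the application of Lemma~\ref{annihilator}(1) in the last implication. One must apply its regular-element converse with $ca$ (not $c$) in the role of the regular element, reading the inclusion ${}^{\circ}(ca)\subseteq{}^{\circ}c$ so as to produce $cR\subseteq caR$; using $c$ as the regular element, or reading the inclusion in the wrong direction, would yield the opposite and useless containment. Apart from selecting the correct one-sided lemma at each step, there is no substantive obstacle, the symmetry with Theorem~\ref{inofbca} being exact.
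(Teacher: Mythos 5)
Your proof is correct and is exactly what the paper intends: the paper gives no separate argument for Theorem~\ref{inofbcdua}, stating only that it follows ``in a similar manner'' from Theorem~\ref{inofbca}, and your write-up is precisely that left--right dualization (including the correct choice of $y=(ca)^{-}c$ and the careful application of Lemma~\ref{annihilator}(1) with $ca$ as the regular element). No discrepancies.
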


By Theorem~\ref{inofbca} and Theorem~\ref{inofbcdua}, we have the following theorem.
In the following theorem, we answer the question when the $(b,c)$-inverse of $a$ is an inner inverse of $a$.
\begin{theorem} \label{inofbcbca}
Let $a,b,c\in R$. Then the following are equivalent:
\begin{itemize}
\item[{\rm (1)}] there exists $y\in R$ such that $aya=a$ and $y$ is the $(b,c)$-inverse of $a$;
\item[{\rm (2)}] $a$ is regular, $aR=abR$, $Rca=Ra$, $(ab)^{\circ}=b^{\circ}$ and ${}^{\circ}(ca)={}^{\circ}c$;
\item[{\rm (3)}] $a$ is regular, $R=a^{\circ}\oplus bR$ and $R={}^{\circ}a\oplus Rc$.
\end{itemize}
\end{theorem}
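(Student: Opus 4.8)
The plan is to reduce the whole statement to the two preceding theorems, after first rewriting condition (1). By Lemma~\ref{abcire}, saying that $y$ is the $(b,c)$-inverse of $a$ is the same as $yay=y$, $yR=bR$ and $Ry=Rc$; hence (1) is equivalent to the existence of a single $y\in R$ with
\[
aya=a,\quad yay=y,\quad yR=bR,\quad Ry=Rc.
\]
Such a $y$ simultaneously witnesses condition (1) of Theorem~\ref{inofbca} (keep the data $aya=a$, $yay=y$, $yR=bR$) and condition (1) of Theorem~\ref{inofbcdua} (keep $aya=a$, $yay=y$, $Ry=Rc$). So the implications $(1)\Rightarrow(2)$ and $(1)\Rightarrow(3)$ are immediate: from Theorem~\ref{inofbca} I read off $a$ regular, $aR=abR$, $(ab)^{\circ}=b^{\circ}$ and $R=a^{\circ}\oplus bR$, while from Theorem~\ref{inofbcdua} I read off $Rca=Ra$, ${}^{\circ}(ca)={}^{\circ}c$ and $R={}^{\circ}a\oplus Rc$; collecting the relevant ones yields (2) and (3).

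For the converse I would first note that, again by Theorem~\ref{inofbca} and Theorem~\ref{inofbcdua}, each of (2) and (3) is exactly the conjunction ``\,Theorem~\ref{inofbca} holds\,'' and ``\,Theorem~\ref{inofbcdua} holds\,'', and is therefore equivalent to the \emph{separate} existence of a $y_1$ with $ay_1a=a$, $y_1ay_1=y_1$, $y_1R=bR$ and of a $y_2$ with $ay_2a=a$, $y_2ay_2=y_2$, $Ry_2=Rc$. Thus both (2) and (3) reduce to producing these two one-sided witnesses, and the remaining task is to manufacture from them one element satisfying all four relations displayed above. The element I would try is the product $y=y_1ay_2$.

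The main work — and the only genuine obstacle — is verifying that this $y$ works, since $y_1$ and $y_2$ are a priori unrelated and only their product is forced to behave well. Using $ay_1a=a$ and $ay_2a=a$ one checks $aya=a$ and $yay=y$ by direct substitution. For the two ideal equalities I would pass through Lemma~\ref{pirdcca}: from $ay_2a=a$ one gets $aR=ay_2R$, whence $yR=y_1ay_2R=y_1aR=y_1R=bR$ (the middle step uses $y_1aR=y_1R$ from $y_1ay_1=y_1$); dually, from $ay_1a=a$ one gets $Ra=Ry_1a$, whence $Ry=Ry_1ay_2=Ray_2=Ry_2=Rc$ (using $Ray_2=Ry_2$ from $y_2ay_2=y_2$). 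By Lemma~\ref{abcire} these three conditions make $y$ the $(b,c)$-inverse of $a$, and since $aya=a$ was already established, (1) follows, closing $(2)\Rightarrow(1)$ and $(3)\Rightarrow(1)$ simultaneously. I expect the identities $aR=ay_2R$ and $Ra=Ry_1a$ to be the steps needing the most care, as they are precisely what glues the independently obtained left- and right-sided inverses into a single inner inverse that is also the $(b,c)$-inverse.
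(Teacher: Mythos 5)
Your proposal is correct and follows exactly the route the paper intends: the paper offers no written proof beyond the remark ``By Theorem~\ref{inofbca} and Theorem~\ref{inofbcdua}, we have the following theorem,'' and your reduction of (1) via Lemma~\ref{abcire} to simultaneous witnesses for those two theorems is precisely that reduction. The one genuinely nontrivial step the paper leaves implicit --- gluing the two independent witnesses $y_1$ and $y_2$ into a single element via $y=y_1ay_2$ and checking $aya=a$, $yay=y$, $yR=bR$, $Ry=Rc$ --- is supplied correctly in your argument.
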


By Theorem~\ref{inofbcbca} and the properties of the Moore-Penrose inverse, group inverse and core inverse,
we have the following corollaries.

\begin{corollary} \label{corinoc}
Let $a\in R$. Then the following are equivalent:
\begin{itemize}
\item[{\rm (1)}] $a\in \core{R}$;
\item[{\rm (2)}] $aR=a^2R$, $Ra^\ast a=Ra$ and $(a^2)^{\circ}=a^{\circ}$;
\item[{\rm (3)}] \emph{\cite[Proposition 2.11]{XCZ}} $R=a^{\circ}\oplus aR$ and $R={}^{\circ}a\oplus Ra^\ast$;
\item[{\rm (4)}] \emph{\cite[Theorem 2.6]{XCZ}} $a\in R^\#\cap R^{\{1,3\}}$.
\end{itemize}
\end{corollary}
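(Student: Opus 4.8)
The plan is to obtain Corollary \ref{corinoc} as the special case $b=a$, $c=a^\ast$ of Theorem \ref{inofbcbca}. The starting point is the identification recorded just before Lemma \ref{ats2innera}, namely \cite[Theorem 4.4]{RDDb}, that the $(a,a^\ast)$-inverse of $a$ coincides with the core inverse $\core{a}$; hence $a\in\core{R}$ if and only if $a$ is $(a,a^\ast)$-invertible. Since the core inverse always satisfies $a\core{a}a=a$, it is an inner inverse of $a$, so the assertion $a\in\core{R}$ is exactly the statement that there exists $y\in R$ with $aya=a$ and $y$ the $(a,a^\ast)$-inverse of $a$ — that is, condition (1) of Theorem \ref{inofbcbca} with $b=a$ and $c=a^\ast$. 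This matches condition (1) of the corollary, so the whole corollary will follow once I translate conditions (2) and (3) of Theorem \ref{inofbcbca} under the same substitution.

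Substituting $b=a$, $c=a^\ast$ into Theorem \ref{inofbcbca}(2) yields ``$a$ regular, $aR=a^2R$, $Ra^\ast a=Ra$, $(a^2)^\circ=a^\circ$ and ${}^\circ(a^\ast a)={}^\circ(a^\ast)$'', and into Theorem \ref{inofbcbca}(3) yields ``$a$ regular, $R=a^\circ\oplus aR$ and $R={}^\circ a\oplus Ra^\ast$''. These are almost the corollary's (2) and (3), except for the extra clauses ``$a$ regular'' and ``${}^\circ(a^\ast a)={}^\circ(a^\ast)$''. The remaining task is to verify that these are redundant. For (3) this is immediate: $R=a^\circ\oplus aR=aR\oplus a^\circ$ forces $a\in R^\#$ by Lemma \ref{pirgroupa}(1), hence $a$ is regular, and the clause may be dropped, recovering \cite[Proposition 2.11]{XCZ}.

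The main work, and the step I expect to be the real obstacle, is showing that the three conditions listed in the corollary's (2) already imply both redundant clauses. Regularity I would extract from $Ra^\ast a=Ra$: since $a\in Ra^\ast a$, write $a=wa^\ast a$; then $(w^\ast)^\ast a^\ast a=wa^\ast a=a$, so $w^\ast\in a\{1,3\}$ by Lemma \ref{134invese}(1), whence $a\in R^{\{1,3\}}$ and in particular $a$ is regular. For the annihilator clause I would apply the involution to $Ra^\ast a=Ra$ to get $a^\ast aR=a^\ast R$; the inclusion-preserving direction of Lemma \ref{annihilator}(1) (which needs no regularity) then gives ${}^\circ(a^\ast a)\subseteq{}^\circ(a^\ast)$, and the reverse inclusion is trivial, so ${}^\circ(a^\ast a)={}^\circ(a^\ast)$. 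This reduces Theorem \ref{inofbcbca}(2) exactly to the corollary's (2).

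Finally I would note that the same computation also yields (4). From $aR=a^2R$ and the regularity just obtained, Lemma \ref{pirregulara}(1) makes $a^2$ regular; then $(a^2)^\circ=a^\circ$ together with Lemma \ref{annihilator}(2) gives $Ra\subseteq Ra^2$, hence $Ra=Ra^2$. Combined with $aR=a^2R$, Lemma \ref{pirgroupa}(3) yields $a\in R^\#$, while $a\in R^{\{1,3\}}$ was already shown, so $a\in R^\#\cap R^{\{1,3\}}$, which is \cite[Theorem 2.6]{XCZ}. Assembling these reductions establishes the chain of equivalences (1)$\Leftrightarrow$(2)$\Leftrightarrow$(3)$\Leftrightarrow$(4). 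The only genuinely delicate point is the handling of the annihilator and regularity hypotheses, where one must be careful to use the directions of Lemma \ref{annihilator} that do not require regularity until regularity has itself been secured from $Ra^\ast a=Ra$.
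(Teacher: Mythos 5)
Your proposal is correct and follows essentially the same route as the paper: the corollary is obtained by specializing Theorem~\ref{inofbcbca} to $b=a$, $c=a^\ast$ (using that the $(a,a^\ast)$-inverse is the core inverse), and the redundant clauses are removed via Lemma~\ref{134invese} and Lemma~\ref{pirgroupa}, which are exactly the two lemmas the paper's one-line proof invokes. You have simply written out in full the details the paper declares ``obvious.''
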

\begin{proof}
It is obvious by Lemma~\ref{134invese} and Lemma~\ref{pirgroupa}.
\end{proof}

\begin{corollary} \label{corinoa}
Let $a,x\in R$. Then the following are equivalent:
\begin{itemize}
\item[{\rm (1)}] \emph{\cite[Theorem 2.8]{RDDb}} $a^\dag=x$ if and only if $axa=a$, $xR=a^\ast R$ and $Rx=Ra^\ast$;
\item[{\rm (2)}] \emph{\cite[Theorem 2.7]{RDDb}} $a^\#=x$ if and only if $axa=a$, $xR=aR$ and $Rx=Ra$.
\end{itemize}
\end{corollary}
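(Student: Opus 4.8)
The plan is to prove the two biconditionals in (1) and (2) separately, as the $b=c=a^{\ast}$ and $b=c=a$ instances of Theorem~\ref{inofbcbca}. Recall from the discussion following Theorem~\ref{anihilata} that $a^{\dag}$ is exactly the $(a^{\ast},a^{\ast})$-inverse of $a$ and $a^{\#}$ is exactly the $(a,a)$-inverse of $a$. By Lemma~\ref{abcire}, ``$a^{\dag}=x$'' therefore means $xax=x$, $xR=a^{\ast}R$, $Rx=Ra^{\ast}$, and ``$a^{\#}=x$'' means $xax=x$, $xR=aR$, $Rx=Ra$. Since both $a^{\dag}$ and $a^{\#}$ are inner inverses whenever they exist, the forward implications are immediate: if $x$ equals the relevant inverse, then $axa=a$ holds along with the two module equalities. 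So the whole content sits in the converse implications, where I must trade the outer relation $xax=x$ for the given inner relation $axa=a$.

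The converse I would carry out in two steps, first securing existence. For (2), from $xR=aR$ and $Rx=Ra$ write $x=ap=ra$; substituting into $axa=a$ gives $a^{2}(pa)=a$ and $(ar)a^{2}=a$, so $a^{\#}$ exists by Lemma~\ref{pirgroupa}(3). For (1), from $xR=a^{\ast}R$ and $Rx=Ra^{\ast}$ write $x=a^{\ast}s=ua^{\ast}$; substituting into $axa=a$ gives $a=aa^{\ast}(sa)\in aa^{\ast}R$ and $a=(au)a^{\ast}a\in Ra^{\ast}a$, hence $a\in R^{\{1,4\}}\cap R^{\{1,3\}}$ by Lemma~\ref{134invese}, so $a^{\dag}$ exists.

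With existence in hand I would identify $x$ with the known inverse by an annihilator computation. In case (2) set $e=aa^{\#}=a^{\#}a$ and $d=x-a^{\#}$. The memberships $x\in aR$ and $x\in Ra$ give $ed=d=de$, while $axa=a$ gives $ada=a(x-a^{\#})a=a-a=0$; hence $d=ede=a^{\#}(ada)a^{\#}=0$, so $x=a^{\#}$. In case (1) set $p=aa^{\dag}$, $q=a^{\dag}a$ and $d=x-a^{\dag}$. Because $p,q$ are Hermitian with $pa=a=aq$, one gets $qa^{\ast}=(aq)^{\ast}=a^{\ast}$ and $a^{\ast}p=(pa)^{\ast}=a^{\ast}$; combined with $x\in a^{\ast}R\cap Ra^{\ast}$ this yields $qd=d=dp$, while again $ada=0$, so $d=qdp=a^{\dag}(ada)a^{\dag}=0$ and $x=a^{\dag}$. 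This simultaneously recovers \cite[Theorem~2.7]{RDDb} and \cite[Theorem~2.8]{RDDb}.

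The main obstacle I anticipate is precisely this converse: the module equalities by themselves do not force $x$ to be an outer inverse, so Lemma~\ref{abcire} cannot be applied directly to the given $x$. The two ingredients that overcome it are (i) deducing existence of $a^{\#}$ (resp.\ $a^{\dag}$) from the inner-inverse relation via Lemma~\ref{pirgroupa} (resp.\ Lemma~\ref{134invese}), and (ii) the sandwiching identity $d=a^{\#}(ada)a^{\#}$ (resp.\ $d=a^{\dag}(ada)a^{\dag}$), whose validity in the Moore--Penrose case rests on the Hermitian nature of the projections $aa^{\dag}$ and $a^{\dag}a$. Once these are set up, $ada=0$ collapses $d$ to zero and the identification is immediate.
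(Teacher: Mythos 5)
Your proof is correct, but it is organized quite differently from the paper's. The paper gives no written argument for this corollary at all: it simply asserts, in the sentence preceding it, that the two characterizations follow from Theorem~\ref{inofbcbca} (equivalently, from Theorem~\ref{inofbca} and Theorem~\ref{inofbcdua} specialized to $b=c=a^{\ast}$ and $b=c=a$) together with the identifications $a^{\dag}=a^{(a^{\ast},a^{\ast})}$ and $a^{\#}=a^{(a,a)}$ and Lemma~\ref{abcire}. That route, as you rightly observe, is not immediate, because the hypotheses of the corollary list $axa=a$ but not $xax=x$, so Lemma~\ref{abcire} cannot be applied to the given $x$ directly; the paper leaves this gap to the reader. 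Your argument fills it: you first extract existence of $a^{\#}$ from $a^{2}(pa)=a$ and $(ar)a^{2}=a$ via Lemma~\ref{pirgroupa}(3), and existence of $a^{\dag}$ from $a\in aa^{\ast}R\cap Ra^{\ast}a$ via Lemma~\ref{134invese}(1)--(2) (using the standard fact, not stated explicitly in the paper, that $R^{\{1,3\}}\cap R^{\{1,4\}}=R^{\dag}$), and then identify $x$ with the known inverse by the sandwich computation $d=ede$ (resp.\ $d=qdp$) combined with $ada=0$; the Hermitian character of $aa^{\dag}$ and $a^{\dag}a$ is used exactly where needed. What your approach buys is a self-contained verification that the three listed conditions alone pin down $x$; what the paper's approach buys is brevity and uniformity with Corollary~\ref{corinoc}, at the cost of suppressing the step from ``inner inverse plus module equalities'' to ``outer inverse.''
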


\centerline {\bf ACKNOWLEDGMENTS} The author is grateful to China Scholarship Council for giving him a purse for his further study in Universitat Polit\`{e}cnica de Val\`{e}ncia, Spain.


\begin{thebibliography}{s2}

\bibitem{BBJ} J. Ben\'{\i}tez, E. Boasso, H.W. Jin, On one-sided $(B,C)$-inverses of arbitrary matrices,  arXiv:1701.09054v1.

\bibitem{BK} E. Boasso, G. Kant\'{u}n-Montiel, The $(b,c)$-inverses in rings and in the Banach context, Mediterr. J. Math. (2017) 14: 112. doi:10.1007/s00009-017-0910-1.

\bibitem{BRB} K.R.S. Bhaskara Rao, The Theory of Generalized Inverses over Commutative Rings, Taylor and Francis, London and New York, 2002.

\bibitem{Dc} M.P. Drazin, A class of outer generalized inverses, Linear Algebra Appl. 436 (2012) 1909-1923.

\bibitem{Df} M.P. Drazin, Left and right generalized inverses, Linear Algebra Appl. 510 (2016) 64-78.

\bibitem{G} J.A. Green, On the structure of semigroups, Ann. Math. 54(1) (1951), 163-172.

\bibitem{Ha} R.E. Hartwig, Block generalized inverses, Arch. Ration. Mech. Anal. 61 (1976) 197-251.

\bibitem{HC} R.Z. Han, J.L. Chen, Generalized inverses of matrices over rings, Chinese Quarterly J. Math. 7(4) (1992), 40-49.

\bibitem{KCC} Y. Y. Ke, D. S. Cvetkovi\'{c}-Ili\'{c}, J. L. Chen, J. Vi\v{s}nji\'{c}, New results on $(b, c)$-inverses, Linear Multilinear Algebra,
              doi.org/10.1080/03081087.2017.1301362.

\bibitem{KVC} Y.Y. Ke, J. Vi\v{s}nji\'{c}, J.L. Chen, One sided-inverse in rings, arXiv:1607.06230v1.

\bibitem{Mb} X. Mary, On generalized inverse and Green's relations, Linear Algebra Appl. 434 (2011), 1836-1844.

\bibitem{MPc} X. Mary, P. Patr\'{i}cio, Generalized inverses modulo $\mathcal{H}$ in semigroups and rings, Linear
Multilinear Algebra 61(8) (2013) 1130-1135.

\bibitem{N} J. von Neumann. On regular rings. Proc. Nati. Acad. Sci. U.S.A. 22(12) (1936), 707-713.

\bibitem{Rg} D.S. Raki\'{c}, A note on Rao and Mitra's constrained inverse and Drazin's (b,c) inverse, Linear Algebra Appl. 523 (2017), 102-108.

\bibitem{RDDb} D.S. Raki\'{c}, N.\v{C}. Din\v{c}i\'{c}, D.S. Djordjevi\'{c}, Group, Moore-Penrose, core and dual core inverse in rings with involution, Linear Algebra Appl. 463 (2014) 115-133.

\bibitem{RMa} C.R. Rao, S.K. Mitra, Generalized inverse of a matrix and its application, in: Proc. Sixth Berkeley Symp on Math. Statist. and Prob., 1 (1972), 601-620.

\bibitem{Wh} Y.M. Wei, A characterization and representation of the generalized inverse $A^{(2)}_{T,S}$ and its applications, Linear Algebra Appl. 280 (1998) 87-96.

\bibitem{WCC} L. Wang, J.L. Chen, N. Castro-Gonz\'{a}lez, Characterizations of the $(b, c)$-inverse in a ring, arXiv:1507.01446v1.

\bibitem{XCZ} S.Z. Xu, J.L. Chen, X.X. Zhang, New characterizations for core inverses in rings with involution, Front. Math. China. 12(1) (2017), 231-246.
\end{thebibliography}
\end{document}